\documentclass[pdflatex,sn-mathphys-num]{sn-jnl}% Math and Physical Sciences Numbered Reference Style

\usepackage{graphicx}%
\usepackage{multirow}%
\usepackage{amsmath,amssymb,amsfonts}%
\usepackage{amsthm}%
\usepackage{mathrsfs}%
\usepackage[title]{appendix}%
\usepackage{xcolor}%
\usepackage{textcomp}%
\usepackage{manyfoot}%
\usepackage{booktabs}%
\usepackage{algorithm}%
\usepackage{algorithmicx}%
\usepackage{algpseudocode}%
\usepackage{listings}%

\theoremstyle{thmstyleone}%
\newtheorem{theorem}{Theorem}%  meant for continuous numbers  lemma
\newtheorem{lemma}{Lemma}

\theoremstyle{thmstyletwo}%
\newtheorem{example}{Example}%

\theoremstyle{thmstylethree}%

\raggedbottom

\begin{document}

\title[On damping a delay control system with global contraction on a temporal graph]{On damping a delay control system with global contraction on a temporal tree}

%%=============================================================%%
%% GivenName	-> \fnm{Joergen W.}
%% Particle	-> \spfx{van der} -> surname prefix
%% FamilyName	-> \sur{Ploeg}
%% Suffix	-> \sfx{IV}
%% \author*[1,2]{\fnm{Joergen W.} \spfx{van der} \sur{Ploeg} 
%%  \sfx{IV}}\email{iauthor@gmail.com}
%%=============================================================%%

\author*[1,2,3]{\fnm{Aleksandr} \sur{Lednov}}\email{lednovalexsandr@gmail.com}

\affil*[1]{\orgname{Saratov State University}, \orgaddress{\street{Astrakhanskaya st., 83}, \city{Saratov}, \postcode{410012}, \country{Russia}}} 

\affil[2]{\orgname{Moscow Center for Fundamental and Applied Mathematics}, \city{Moscow}, \country{Russia}}

\affil[3] {\orgname{Lomonosov Moscow State University}, \orgaddress{\street{Leninskiye Gory, 1}, \city{Moscow}, \postcode{119991}, \country{Russia}}} %, 

\abstract{
We consider the problem of damping a control system with delay described by first-order functional-differential equations on a temporal tree. 
The delay in the system is time-proportional and propagates through the internal vertices. 
The problem of minimizing the energy functional with account of the probabilities of the scenarios corresponding to different edges is studied.
We establish the equivalence of this variational problem to a certain boundary value problem for second-order functional-diffferential equations on the tree, possessing both the global contractions and the global extensions, and prove the unique solvability of both problems. 
In particular, it is established that the optimal trajectory obeys Kirchhoff-type conditions at the internal vertices.
}

\keywords{pantograph equation, functional-differential equation, quantum graph, global delay, temporal graph, variational problem, optimal control}

\pacs[MSC Classification]{93C23, 49J55, 34K35, 34K10}

\maketitle

\section{Introduction}\label{sec1}

Krasovskii \cite{bib1} has posed and studied the problem of damping a control system with aftereffect described by an equation of retarded type with constant delay. Later on, Skubachevskii \cite{bib2,bib3} considered a generalization of this problem to the case in which the equation contains the delay also in the dominant terms, that is, belongs to neutral type (see also \cite{bib4} and the references therein). Recently, Buterin extended this problem to the so-called temporal graphs \cite{bib5,bib6,bib7,bib8}.

Differential operators on graphs, often called quantum graphs, have been extensively studied since the previous century in connection with modeling various processes in complex systems that can be represented as spatial networks \cite{bib9,bib10,bib11,bib12,bib13}. Such models typically involve Kirchhoff-type conditions at the internal vertices. Studies \cite{bib5,bib6,bib7,bib8} have shown that analogous conditions are satisfied by optimal trajectories on temporal graphs.

The concept of functional-differential operators on graphs with global delay was suggested in \cite{bib14}. In other words, the delay propagates through all internal vertices of the graph to the subsequent edges.  Specifically, the solution on the incoming edge determines the initial function for the equations on the outgoing edges. The global delay has become an alternative to the locally nonlocal settings introduced by Wang and Yang in \cite{bib15}, where the equation on each edge possessed its own delay parameter and could be considered independently.

The introduction of global delay made it possible to extend to graphs \cite{bib5,bib6} the mentioned problem of damping a control system with aftereffect. This has led to the concept of a temporal graph, the edges of which are parameterised by the time variable, while at each internal vertex, several distinct scenarios arise for the further course of the process in accordance with the number of edges emanating from that vertex. In \cite{bib5,bib7}, a stochastic interpretation of control systems on temporal trees was proposed. For example, such a system can be obtained by replacing the coefficients in the equation on an interval with discrete-time stochastic processes having a finite number of states in $\mathbb{R}$. The countable number of states corresponds to an infinite temporal tree \cite{bib7}. In \cite{bib8}, a globally nonlocal integro-differential control system on a star-graph was studied.

In the present paper, we extend to graphs the problem of damping a control system described by the so-called pantograph equation, which has many important applications \cite{bib16,bib17,bib18,bib19,bib20,bib21}. 
In this case, the delay is not constant but it is a time-proportional contraction. 
Recently, differential equations with this type of delay have also attracted special attention in the inverse spectral theory \cite{bib26}.

The case of the damping problem for a control system with time-proportional delay on an interval was considered by Rossovskii \cite{bib22} for the neutral equation:
\begin{equation}
y^{\prime}\left(t\right)+a y^{\prime}\left(q^{-1} t\right)+b y\left(t\right)+c y\left(q^{-1} t\right)=u\left(t\right), \quad t>0, 
\label{eq_i1}
\end{equation}
where $a$, $b$, $c \in \mathbb{R}$ and $q>1$, while $u\left(t\right)$ is a real-valued square-integrable control function. The state of the system at the initial time is defined by the condition
\begin{equation}
y\left(0\right)=y_0 \in \mathbb{R}. 
\label{eq_i2}
\end{equation}

The control problem can be formulated as follows. Find  such a control $u\left(t\right)$ that would bring the system (\ref{eq_i1}) and (\ref{eq_i2}) into the equilibrium $y\left(t\right)=0$ for $t\geq T$.

For achieving this, it is sufficient to find $u\left(t\right)\in L_2\left(0,T\right)$ leading to the state
\begin{equation}
y\left(t\right)=0, \quad q^{-1}T \leqslant t \leqslant T, 
\label{eq_i3}
\end{equation}
and to put $u\left(t\right) = 0$ for $t\geq T$. Since such $u\left(t\right)$ is not unique, it is reasonable to look for it trying to minimize the efforts $\|u\|_{L_2\left(0,T\right)}$.

This leads to the variational problem of minimizing the egergy functional
\begin{equation}
\mathcal{J}\left(y\right)=\displaystyle\int_0^{T}\left(y^{\prime}\left(t\right)+a y^{\prime}\left(q^{-1} t\right) +b y\left(t\right)+c y\left(q^{-1} t\right)\right)^2 d t \longrightarrow \min 
\label{eq_i4}
\end{equation}
over the set of functions $y\left(t\right)\in W^1_2[0,T]$ satisfying the boundary conditions (\ref{eq_i2}), (\ref{eq_i3}).

Solution of the problem (\ref{eq_i2})-(\ref{eq_i4}) was obtained in \cite{bib22}, where the problem was reduced to an equivalent boundary value problem for a second-order functional-differential equation. In particular, it was shown that if the function $y(t) \in W_2^1[0, T]$ satisfies conditions (\ref{eq_i2}) and (\ref{eq_i3}) and minimizes the functional (\ref{eq_i4}), then the integral identity 
\begin{multline*}
\int_0^{q^{-1}T}\left(\left(1+a^2 q\right) y^{\prime}(t)+a y^{\prime}\left(q^{-1} t\right)+a q y^{\prime}(q t)\right) v^{\prime}(t) d t \\
+\int_0^{q^{-1}T}\left(\left(a b-c q^{-1}\right) y^{\prime}\left(q^{-1} t\right)+\left(c q-a b q^2\right) y^{\prime}(q t)\right.\\
\left.+\left(b^2+c^2 q\right) y(t)+b c y\left(q^{-1} t\right)+b c q y(q t)\right) v(t) d t=0
\end{multline*}
holds for all $v \in W_2^1[0, T]$ such that $v\left(0\right)=0$ and $v\left(t\right)=0$ on $\left[q^{-1}T,T\right]$.

This means that the function $y(t) \in W_2^1[0, T]$ is a solution of the boundary value problem for the equation
\begin{multline}
-\left(\left(1+a^2 q\right) y^{\prime}(t)+a y^{\prime}\left(q^{-1} t\right)+a q y^{\prime}(q t)\right)^{\prime}
\\
+\left(a b-c q^{-1}\right) y^{\prime}\left(q^{-1} t\right)+\left(c q-a b q^2\right) y^{\prime}(q t)
\\
+\left(b^2+c^2 q\right) y(t)+b c y\left(q^{-1} t\right)+b c q y(q t)=0, \quad 0<t<q^{-1}T, 
\label{eq_i5}
\end{multline}
under conditions (\ref{eq_i2}) and (\ref{eq_i3}). Since this boundary value problem may have no solution in $W_2^2\left[0,q^{-1}T\right]$,  the solution is understood in the generalized sense, that is,
$$
\left(1+a^2 q\right) y^{\prime}(t)+a y^{\prime}\left(q^{-1} t\right)+a q y^{\prime}(q t)\in W_2^1\left[0,q^{-1}T\right].
$$

The converse is also true: if $y(t) \in W_2^1\left[0, T\right]$ is a generalized solution of the problem (\ref{eq_i2}), (\ref{eq_i3}), (\ref{eq_i5}), then $y$ minimizes the functional (\ref{eq_i4}).

The following theorem gives the existence and uniqueness of a generalized solution to the boundary value problem (\ref{eq_i2})--(\ref{eq_i4}), which also means the unique solvability of the variational problem (\ref{eq_i2})-(\ref{eq_i4}).

\begin{theorem}[\cite{bib22}]\label{T1}
Let $|a| \neq q^{-1/2}$. Then the boundary value problem (\ref{eq_i2})--(\ref{eq_i4}) has a unique generalized solution $y \in W_2^1[0, T]$.
\end{theorem}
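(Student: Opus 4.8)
The plan is to cast the variational problem (\ref{eq_i2})--(\ref{eq_i4}) as the minimization of a quadratic functional over a closed affine subspace of a Hilbert space and then to apply the Lax--Milgram/Fredholm machinery. Writing $\mathcal{L}y := y'(t)+a\,y'(q^{-1}t)+b\,y(t)+c\,y(q^{-1}t)$, so that $\mathcal{J}(y)=\|\mathcal{L}y\|_{L_2(0,T)}^2$, I would fix one admissible function $y_\star\in W_2^1[0,T]$ (with $y_\star(0)=y_0$ and $y_\star\equiv 0$ on $[q^{-1}T,T]$) and set $y=y_\star+v$ with $v$ ranging over the closed subspace $V:=\{v\in W_2^1[0,T]:\ v(0)=0,\ v\equiv 0 \text{ on } [q^{-1}T,T]\}$. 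Minimizing $\mathcal{J}$ is then equivalent to solving the variational identity $B(v,w)=\ell(w)$ for all $w\in V$, where $B(v,w):=\langle\mathcal{L}v,\mathcal{L}w\rangle_{L_2}$ and $\ell(w):=-\langle\mathcal{L}y_\star,\mathcal{L}w\rangle_{L_2}$; this is exactly the integral identity quoted before (\ref{eq_i5}), together with its converse. Boundedness of $B$ and $\ell$ is immediate since $\mathcal{L}:W_2^1[0,T]\to L_2(0,T)$ is bounded, so by the Riesz representation theorem the identity takes the form $\mathcal{A}v=f$ for a bounded self-adjoint operator $\mathcal{A}$ on $V$, and the whole problem reduces to showing that $\mathcal{A}$ is invertible.

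The heart of the matter is a lower bound for the principal part of $B$, and this is where the hypothesis $|a|\ne q^{-1/2}$ enters. I would decompose $(0,T]$ into the scaling blocks $I_k:=(q^{-k-1}T,\,q^{-k}T]$, $k\ge 0$. Setting $z=v'$ turns the contraction $y'(q^{-1}\cdot)$ into the operator $(Sz)(t)=z(q^{-1}t)$, which maps $I_k$ onto $I_{k+1}$ and, after the natural renormalization that flattens the weights $\|z|_{I_k}\|$, acts as $\sqrt{q}$ times a one-sided shift $\Sigma$. The constraint $v\equiv 0$ on $[q^{-1}T,T]=I_0\cup\{q^{-1}T\}$ forces the zeroth block $z|_{I_0}$ to vanish, so the principal part $\int_0^T|z+aSz|^2\,dt$ becomes $\|(I+a\sqrt{q}\,\Sigma)\tilde z\|^2$ on the subspace $\{\tilde z_0=0\}$. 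Computing $M^{*}M$ for $M=I+a\sqrt{q}\,\Sigma$ restricted to this subspace yields a Jacobi (tridiagonal) operator with constant diagonal $1+a^2q$ and off-diagonals $a\sqrt{q}$, whose spectrum is $[(|a|\sqrt{q}-1)^2,\,(|a|\sqrt{q}+1)^2]$. Hence $\int_0^T|z+aSz|^2\,dt\ge(|a|\sqrt{q}-1)^2\,\|z\|_{L_2}^2$, and the bottom constant is strictly positive precisely when $|a|\ne q^{-1/2}$; at $|a|=q^{-1/2}$ it vanishes and coercivity is lost. I expect this spectral-gap computation to be the main obstacle, and it is exactly what singles out the stated hypothesis.

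Once the principal part is controlled, the zeroth-order terms $b\,v+c\,v(q^{-1}\cdot)$ are dominated by $\|v\|_{L_2}$ and, through the Poincaré inequality available from $v(0)=0$, contribute only a Gårding-type estimate $B(v,v)\ge\alpha\|v\|_{W_2^1}^2-C\|v\|_{L_2}^2$ with $\alpha>0$. Since the embedding $V\hookrightarrow L_2(0,T)$ is compact, this makes $\mathcal{A}$ a Fredholm operator of index zero, so invertibility follows from injectivity alone. Injectivity I would extract from the causal (Volterra) nature of the proportional delay: if $B(v,v)=\|\mathcal{L}v\|^2=0$ then $\mathcal{L}v=0$ a.e., and reading this equation successively on $I_0,I_1,\dots$ forces $v$ to vanish block by block — for $a\ne0$ each step reduces, on $I_{k+1}$, to a first-order linear ODE $a\,v'+c\,v=0$ whose initial value is inherited as zero by continuity from the previous block, while the degenerate case $a=0$ (where the principal part is already $\|v'\|^2$) is handled directly. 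Thus $\mathcal{A}$ is injective, hence bijective, which gives existence and uniqueness of the generalized solution; the same injectivity is the strict convexity of $\mathcal{J}$ on the affine set and yields uniqueness of the minimizer as well. Combining the coercivity-modulo-compactness estimate with this injectivity completes the argument under the single hypothesis $|a|\ne q^{-1/2}$.
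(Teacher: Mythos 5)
Your proposal is sound, and it is worth noting at the outset that the paper itself does not prove Theorem~\ref{T1}: the result is imported from \cite{bib22}, and the only complete proofs given here are for the tree analogue in the retarded case $a=0$ (Theorem~\ref{T5} via Lemmas~\ref{L3} and~\ref{L4}). Your architecture --- affine reduction $y=y_\star+v$, the bilinear form $B(v,w)=\langle\mathcal{L}v,\mathcal{L}w\rangle_{L_2}$, boundedness, a coercivity estimate, and a Riesz/Fredholm conclusion --- matches the paper's template, but the two differ in the decisive step. The paper obtains coercivity of $\mathcal{J}$ on the admissible subspace by a soft contradiction argument (Lemma~\ref{L4}): a G\aa rding-type inequality of the form (\ref{eq_us9}), compactness of $W_2^1\hookrightarrow L_2$, and uniqueness of the trivial solution of the homogeneous Cauchy problem; this is painless precisely because with $a=0$ the principal part is already $\|v'\|_{L_2}^2$ and no condition on $a$ is visible. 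Your block decomposition $I_k=(q^{-k-1}T,q^{-k}T]$ with the renormalization $\tilde z_k(s)=q^{-k/2}z(q^{-k}s)$ is the genuinely new ingredient: it converts the neutral principal part into $\sum_{k\ge 0}\|\tilde z_k+a\sqrt{q}\,\tilde z_{k+1}\|^2$ with $\tilde z_0=0$, and the elementary estimate $(1+a^2q)\|x\|^2+2a\sqrt{q}\langle x,\Sigma x\rangle\ge(1-|a|\sqrt{q})^2\|x\|^2$ (you do not even need the full Jacobi spectral theory, only $\|\Sigma\|\le 1$) gives an explicit coercivity constant and isolates exactly why $|a|\ne q^{-1/2}$ is the right hypothesis --- note that the bound stays positive also for $|a|\sqrt q>1$, which is essential since the theorem does not assume smallness of $a$. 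Your backward block-by-block injectivity argument (reading $\mathcal{L}v=0$ on $I_k$ as the ODE $av'+cv=0$ on $I_{k+1}$ with zero data inherited by continuity) is a clean substitute for the paper's appeal to Volterra-type unique solvability of the Cauchy problem. In short: same functional-analytic skeleton, but your quantitative spectral-gap computation replaces the paper's compactness-contradiction lemma and is what actually covers the neutral case $a\ne 0$ that the paper deliberately sets aside.
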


In the present work, we generalize this problem to an arbitrary tree. For simplicity, we restrict our attention to the case $a=0$, i.e. we consider a retarded-type control system.
The netral system on a star-graph was briefly addressed in \cite{bib24}. The case of an arbitrary tree requires a separate investigation.

To illustrate the specifics of the problem on graphs, in the next section, we formulate the obtained results on a star-shaped graph (see also  \cite{bib23}). 
In Section 3, we introduce the control system with time-proportional delay on an arbitrary tree and formulate the corresponding variational problem. In Section 4, we establish its equivalence to a boundary value problem on the tree. In the last section, the unique solvability of both problems is proved.

\section{Star-shaped graph}\label{sec2}

Let up to the time point $t=T_1$, associated with the internal vertex $v_1$ of the graph $\Gamma_m$ in Figure \ref{fig1}, our control system with time-proportional delay be described by the equation
\begin{equation}
\ell_1 y\left(t\right):=y_1^{\prime}\left(t\right)+b_1 y_1\left(t\right)+c_1 y_1\left(q^{-1}t\right)=u_1\left(t\right),\quad 0<t<T_1,
\label{eq_s1}
\end{equation}
where $y_1(t)$ is defined on the edge $e_1=\left[v_0, v_1\right]$ of $\Gamma_m$, with the initial condition
\begin{equation}
y_1\left(0\right)=y_0.
\label{eq_s2}
\end{equation}
\begin{figure}[h]
\centering
\includegraphics[width=0.9\textwidth]{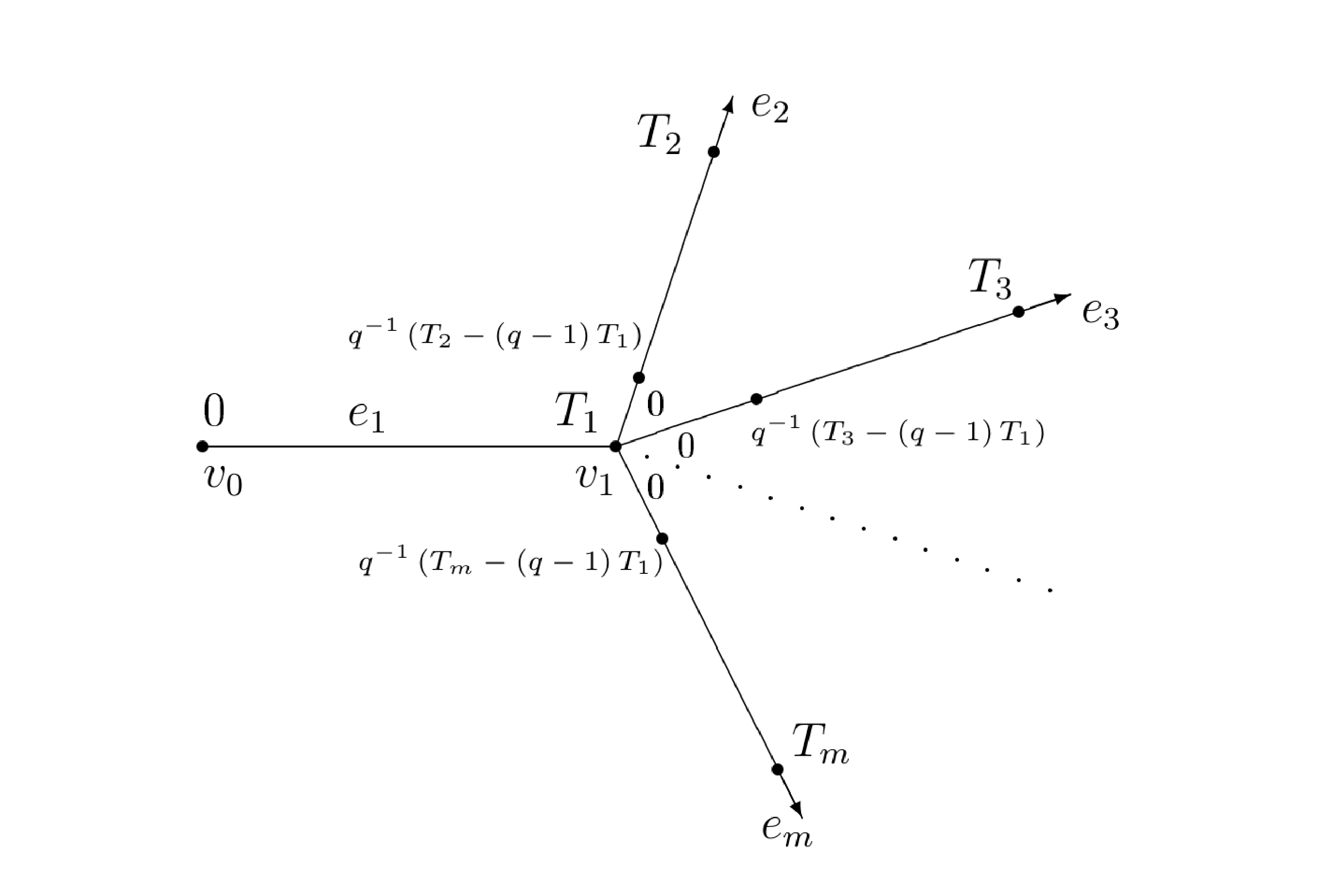}
\caption{A star-shaped graph $\Gamma_m$}\label{fig1}
\end{figure}

At the vertex $v_1$, this system branches into $m-1$ independent processes described by the equations
\begin{equation}
\ell_j y\left(t\right):=y_j^{\prime}\left(t\right)+b_j y_j\left(t\right)+c_j y_j\left(q^{-1}\left(t - \left(q-1\right)T_1\right)\right)=u_j\left(t\right),\;\;\; t>0,\;\;\; j=\overline{2, m},
\label{eq_s3}
\end{equation}
but having a common history determined by equation (\ref{eq_s1}) along with initial conditions (\ref{eq_s2}) and
\begin{equation}
y_j\left(t\right)=y_1\left(t+T_1\right), \quad \left(q^{-1}-1\right)T_1 < t < 0, \quad j=\overline{2,m},
\label{eq_s4}
\end{equation}
as well as the continuity conditions at $v_1$ :
\begin{equation}
y_j\left(0\right)=y_1\left(T_1\right), \quad j=\overline{2, m},
\label{eq_s5}
\end{equation}
agreeing with (\ref{eq_s4}) as $t \to 0^-$.

As in the preceding section, we assume that $q>1$, $y_0\in\mathbb{R}$ and $b_j$, $c_j\in\mathbb{R}$, $j=\overline{1, m}$.

For $j=\overline{2, m}$, the $j$th equation in (\ref{eq_s3}) is defined on the infinite edge $e_j$ of $\Gamma_m$. Conditions (\ref{eq_s4}) mean that the delay propagates through $v_1$.

The problem (\ref{eq_s1})-(\ref{eq_s5}) has a unique solution $y_j\left(t\right)\in W_2^1\left[0,T_j\right]$, $j=\overline{1, m}$, for arbitrary fixed $T_j>0$, $j=\overline{2, m}$, whenever $u_j(t) \in L_2(0, T_j)$ for $j = \overline{1,m}$. Indeed, on the edge $e_1$, it can be reduced to solving a Volterra equation of the second kind with a piecewise-constant kernel and a free term from $W_2^1[0,T_1]$:
$$
y_1(t)=f(t)+\displaystyle\int_0^{t}K(t,s)y_1(s)ds,
$$
where $$f(t)=y_0+\displaystyle\int_0^{t} u_1(s)ds,$$
$$K(t, s) = 
\begin{cases}
- (b_1 +  qc_1), &  s \leq q^{-1}t, \\
- b_1, & s > q^{-1}t.
\end{cases}
$$ 
Then, the obtained $y_1(t)$ gives the common initial function (\ref{eq_s4}) and the common initial condition (\ref{eq_s5}) for all equations in (\ref{eq_s3}), which can be solved similarly.

For definiteness, let $T_j>\left(q-1\right)T_1$ for all $j=\overline{2, m}$. Analogously to the preceding section, one needs to
bring the system (\ref{eq_s1})-(\ref{eq_s5}) into the equilibrium state
\begin{equation}
y_j\left(t\right)=0, \quad q^{-1}\left(T_j-\left(q-1\right)T_1\right)\leqslant t\leqslant T_j, \quad j=\overline{2,m},
\label{eq_s6}
\end{equation}
by choosing suitable controls $u_j(t) \in L_2\left(0, T_j\right), j=\overline{1, m}$. Then, letting $u_j(t)=0$ for $t>T_j$ and $j=\overline{2, m}$, we will have $y_j(t)=0$ for such $t$ and $j$. In other words, the system will be damped on each outgoing edge. Since such $u_j(t)$ are not unique, it is natural to try reducing the efforts $\left\|u_j\right\|_{L_2\left(0, T_j\right)}$ as much as possible. In order to regulate the participation of each $\left\|u_j\right\|_{L_2\left(0, T_j\right)}^2$ in the corresponding energy functional by choosing a certain positive weight $\alpha_j$.

Thus, we arrive at the variational problem
\begin{equation}
\sum_{j=1}^m \alpha_j \int_0^{T_j}\left(\ell_j y(t)\right)^2 d t \rightarrow \min
\label{eq_s7}
\end{equation}
under conditions (\ref{eq_s2}) and (\ref{eq_s4})-(\ref{eq_s6}), where $\alpha_j>0$, $j=\overline{1, m}$, are fixed.

To choose the weights $\alpha_j$, $j = \overline{1, m}$, one can employ a probabilistic approach~\cite{bib5}. In this case, it is assumed that $\alpha_1 = 1$, and the values of $\alpha_j$ for $j = \overline{2, m}$ are determined as the probabilities of the scenarios defined by the equations in (\ref{eq_s3}), so that $$\alpha_2 + \ldots + \alpha_m = 1.$$ 
Coincidence of all the scenarious would lead to the interval case (see example 2 in \cite{bib5}).

The following theorems hold.

\begin{theorem}\label{T2}
Functions $y_j\in W_2^1\left[0,T_j\right]$, $j=\overline{1, m}$, form a solution of the variational problem (\ref{eq_s2}), (\ref{eq_s4})-(\ref{eq_s7}) if and only if they possess the additional smoothness
$$
y_1\left(t\right)\in W_2^2\left[0,T_1\right],\quad y_j\left(t\right)\in W_2^2\left[0,q^{-1}\left(T_j-\left(q-1\right)T_1\right)\right], \quad j=\overline{2, m},
$$
and solve the boundary value problem (which we denote by $\mathcal{B}$) consisting of the equations 
$$
\alpha_1\left(\ell_jy\right)^{\prime}\left(t\right)= \alpha_1b_j \ell_j y\left(t\right) +
\begin{cases}
q \alpha_1c_1\ell_1 y\left(qt\right), & 0<t<q^{-1}T_1,
\\
q \displaystyle\sum_{k=2}^m \alpha_k c_k\ell_k y\left(qt-T_1\right), & q^{-1}T_1< t<T_1, 
\end{cases}
$$
$$
\left(\ell_jy\right)^{\prime}\left(t\right)= b_j \ell_j y\left(t\right) + q  c_j\ell_j y\left(qt+\left(q-1\right)T_1\right),\quad 0<t<q^{-1}\left(T_j-\left(q-1\right)T_1\right), \quad j=\overline{2, m},
$$
along with the standing conditions (\ref{eq_s2}) and (\ref{eq_s4})-(\ref{eq_s6}) as well as the Kirchhoff-type condition
$$
\alpha_1 y_1^\prime\left(T_1\right)+
\Bigg( 
\alpha_1 b_1-\displaystyle\sum_{j=2}^m \alpha_j b_j
\Bigg)
y_1\left(T_1\right)+
\Bigg(
\alpha_1c_1-\displaystyle\sum_{j=2}^m \alpha_j c_j
\Bigg)
y_1\left(q^{-1}T_1\right)=\displaystyle\sum_{j=2}^m \alpha_jy_j^\prime\left(0\right),
$$
additionally emerging at the internal vertex $v_1$.
\end{theorem}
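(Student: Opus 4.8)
The plan is to treat this as a convex variational problem. Since the constraints (\ref{eq_s2}) and (\ref{eq_s4})--(\ref{eq_s6}) cut out an affine set, the admissible variations $v=(v_1,\dots,v_m)$ form the linear space $V$ of functions obeying the homogeneous constraints $v_1(0)=0$, $v_j(0)=v_1(T_1)$, $v_j(t)=v_1(t+T_1)$ on $((q^{-1}-1)T_1,0)$ and $v_j\equiv0$ on the damping intervals. The functional $\mathcal J(y)=\sum_j\alpha_j\int_0^{T_j}(\ell_j y)^2\,dt$ is quadratic with nonnegative quadratic part, so from the expansion $\mathcal J(y+v)=\mathcal J(y)+2\sum_j\alpha_j\int_0^{T_j}(\ell_j y)(\ell_j v)\,dt+\sum_j\alpha_j\int_0^{T_j}(\ell_j v)^2\,dt$ one sees that $y$ minimises over the affine set if and only if the first variation vanishes, i.e.
\[
\sum_{j=1}^m\alpha_j\int_0^{T_j}(\ell_j y)(t)\,(\ell_j v)(t)\,dt=0\qquad\text{for all }v\in V.
\]
It therefore suffices to show that this integral identity is equivalent to the asserted smoothness together with the boundary value problem $\mathcal B$; the implication from $\mathcal B$ back to the identity is obtained by reversing the integrations by parts performed below, so the substance of the proof is the forward direction.

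The heart of the argument is to rewrite this identity by eliminating the delayed arguments. In each product $(\ell_j y)(\ell_j v)$ the factor $v_j'$ enters linearly, while the undifferentiated delayed factors $v_1(q^{-1}\,\cdot\,)$ on $e_1$ and $v_j(\tau_j(\,\cdot\,))$ on $e_j$ (with $\tau_j(t)=q^{-1}(t-(q-1)T_1)$) must be brought back to the running variable by the substitutions $s=q^{-1}t$ and $s=\tau_j(t)$, each producing a Jacobian factor $q$ and shifted coefficients $\ell_k y(qt)$, $\ell_k y(qt+(q-1)T_1)$. The key point, forced by the global delay, is that on $e_j$ the delayed argument reaches into the prehistory $s<0$, where (\ref{eq_s4}) replaces $v_j(s)$ by $v_1(s+T_1)$; after the further shift $r=s+T_1$ this converts the corresponding part of the $e_j$-integral into $\int_{q^{-1}T_1}^{T_1} q\alpha_j c_j\,\ell_j y(qr-T_1)\,v_1(r)\,dr$. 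Summation over $j$ then produces exactly the coupling term $q\sum_{k=2}^m\alpha_k c_k\,\ell_k y(qt-T_1)$ appearing in the $e_1$-equation on $(q^{-1}T_1,T_1)$.

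With the identity reorganised in the form $\sum_i\big(\int(\text{coeff of }v_i')\,v_i'+\int(\text{coeff of }v_i)\,v_i\big)=0$, I would extract the equations of $\mathcal B$ by specialising the test functions in three stages. Taking $v_1$ compactly supported in $(0,q^{-1}T_1)$ (all other components then forced to zero) and invoking the du Bois-Reymond lemma gives both the regularity $\alpha_1\ell_1 y\in W_2^1$ and the first case of the $e_1$-equation. Taking $v_1$ compactly supported in $(q^{-1}T_1,T_1)$ — which necessarily activates the prehistories of the $v_j$ through (\ref{eq_s4}) while leaving $v_j\equiv0$ on $[0,T_j]$ — yields the second, coupled case. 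Finally, taking $v_1\equiv0$ and $v_j$ supported away from the vertex and from the damping interval yields the $e_j$-equations (here the factor $\alpha_j$ cancels) together with $y_j\in W_2^2$ on $[0,q^{-1}(T_j-(q-1)T_1)]$. Because the piecewise-defined right-hand sides lie in $L_2$, the lemma delivers $\alpha_1\ell_1 y\in W_2^1[0,T_1]$ across the interface $q^{-1}T_1$ with no extra matching condition, which is precisely the asserted smoothness.

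Once all interior equations are known, I would return to the full identity with a test function for which $v_1(T_1)\neq0$. Only the endpoint terms from integrating $\int(\ell_i y)v_i'$ by parts survive: the contribution at $t=0$ on $e_1$ drops out since $v_1(0)=0$, those at $t=T_j$ drop out since $v_j(T_j)=0$, and the residual $\alpha_1\ell_1 y(T_1)v_1(T_1)-\sum_{j\ge2}\alpha_j\ell_j y(0)v_j(0)$ collapses through $v_j(0)=v_1(T_1)$ into $\alpha_1\ell_1 y(T_1)=\sum_{j\ge2}\alpha_j\ell_j y(0)$. Expanding $\ell_1 y(T_1)$ and $\ell_j y(0)$ and using (\ref{eq_s4})--(\ref{eq_s5}) to rewrite $y_j(0)=y_1(T_1)$ and $y_j((q^{-1}-1)T_1)=y_1(q^{-1}T_1)$ then gives exactly the stated Kirchhoff-type relation. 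I expect the main obstacle to be the bookkeeping of the second step: getting the limits of integration, the Jacobians, and the two distinct shifts ($qt-T_1$ versus $qt+(q-1)T_1$) right while correctly routing the variation $v_1|_{(q^{-1}T_1,T_1)}$ through the global-delay conditions, since this is where the coupling term and the vertex condition are generated simultaneously.
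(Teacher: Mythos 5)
Your proposal is correct and follows essentially the same route as the paper: Theorem~\ref{T2} is there obtained as the case $d=1$ of Theorem~\ref{T4}, whose proof (Lemmas~1 and~\ref{L2}) consists of exactly the same first-variation identity, the same change of variables that routes the prehistory of each $v_j$ through the global-delay condition (\ref{eq_s4}) into an integral over $(q^{-1}T_1,T_1)$ on $e_1$, and the same du Bois-Reymond/integration-by-parts extraction of the equations, the extra smoothness, and the Kirchhoff-type condition (the paper delegates the regularity and vertex step to a cited lemma from \cite{bib5}). The only slip is the parenthetical claim that choosing $v_1$ compactly supported in $(0,q^{-1}T_1)$ \emph{forces} the remaining components to vanish --- it merely makes the zero choice admissible --- but since you are only selecting test functions from the admissible space, this does not affect the argument.
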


\begin{theorem}\label{T3}
The boundary value problem $\mathcal{B}$ has a unique solution. Moreover, this solution satisfies the estimate
$$
\|y_1\|_{W_2^1\left[0,T_1\right]}+\sum_{j=2}^m\|y_j\|_{W_2^1\left[0, q^{-1}\left(T_j-\left(q-1\right)T_1\right)\right]} \leq C|y_0|,
$$
where $C$ is independent of $y_0$.
\end{theorem}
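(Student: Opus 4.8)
The plan is to exploit the equivalence established in Theorem~\ref{T2} and to prove unique solvability together with the estimate for the \emph{variational} problem (\ref{eq_s2}), (\ref{eq_s4})--(\ref{eq_s7}), which only involves the contraction (no extension), and then transfer the conclusions to $\mathcal{B}$. First I would fix the Hilbert space
$$
\mathcal{H}=W_2^1[0,T_1]\times\prod_{j=2}^m W_2^1[0,\tau_j],\qquad \tau_j:=q^{-1}\bigl(T_j-(q-1)T_1\bigr),
$$
where each $y_j$, $j=\overline{2,m}$, is identified with its restriction to $[0,\tau_j]$ and extended by zero in accordance with (\ref{eq_s6}). The admissible set $\mathcal{A}$ is the affine subset of $\mathcal{H}$ cut out by the standing conditions (\ref{eq_s2}), (\ref{eq_s4})--(\ref{eq_s6}); all of these are homogeneous linear constraints except the single inhomogeneous condition $y_1(0)=y_0$. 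Let $\mathcal{H}_0$ denote the corresponding linear subspace of admissible variations (with $v_1(0)=0$). On $\mathcal{H}$ introduce the bounded symmetric bilinear form $B(y,w)=\sum_{j=1}^m\alpha_j\langle\ell_j y,\ell_j w\rangle_{L_2(0,T_j)}$, so that the functional in (\ref{eq_s7}) is $B(y,y)$.

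The whole argument then reduces to one coercivity estimate: there is $\gamma>0$ with $B(v,v)\ge\gamma\|v\|_{\mathcal{H}}^2$ for all $v\in\mathcal{H}_0$. Granting this, the standard theory of coercive quadratic functionals (equivalently, the Lax--Milgram lemma applied to $B$ on $\mathcal{H}_0$) yields a unique minimizer of $B(\,\cdot\,,\,\cdot\,)$ over $\mathcal{A}$, characterized by the Euler equation $B(y,v)=0$ for all $v\in\mathcal{H}_0$; by Theorem~\ref{T2} this minimizer is precisely the unique solution of $\mathcal{B}$.

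The main obstacle is the coercivity itself, because of the time-proportional delay, and I would establish it through a compact-perturbation (semi-Fredholm) scheme. Consider the bounded operator $L\colon\mathcal{H}_0\to\prod_{j=1}^m L_2(0,T_j)$, $Lv=(\ell_j v)_{j=1}^m$, and split $L=D+K$, where $Dv=(v_j')_{j}$ is the principal (differentiation) part and $K$ collects the zero-order and delay terms $b_j v_j(\cdot)+c_j v_j(q^{-1}(\cdot))$. The operator $D$ is bounded below on $\mathcal{H}_0$: since $v_1(0)=0$ by (\ref{eq_s2}) and $v_j(\tau_j)=0$ by (\ref{eq_s6}), the Poincar\'e inequality on each edge gives $\|v_j\|_{L_2}\le c\|v_j'\|_{L_2}$, whence $\|v\|_{\mathcal{H}}^2\le(1+c^2)\|Dv\|^2$. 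The operator $K$ is compact, since the dilations $v\mapsto v(q^{-1}\cdot)$ and the multiplications are bounded on $L_2$ and are precomposed with the compact embedding $\mathcal{H}\hookrightarrow\prod_j L_2$ (Rellich). Hence $L$, as a compact perturbation of a bounded-below operator, still has closed range and finite-dimensional kernel. Finally, $L$ is injective: if $\ell_j v=0$ for all $j$ with $v\in\mathcal{H}_0$, then the Volterra argument from Section~\ref{sec2} (the delayed argument $q^{-1}(t-(q-1)T_1)$ is always strictly less than $t$) forces $v_1\equiv0$ on $e_1$, after which the homogeneous history and continuity conditions produce zero data for each equation $\ell_j v=0$ and hence $v_j\equiv0$. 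Being injective with closed range, $L$ is bounded below, $\|Lv\|\ge\gamma_0\|v\|_{\mathcal{H}}$, and since $B(v,v)=\sum_j\alpha_j\|\ell_j v\|^2\ge(\min_j\alpha_j)\|Lv\|^2$, coercivity follows with $\gamma=(\min_j\alpha_j)\gamma_0^2$.

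It remains to obtain the estimate $\|y\|_{\mathcal{H}}\le C|y_0|$. I would choose an explicit lift $y^{*}\in\mathcal{A}$ depending linearly on $y_0$: take $y_1^{*}(t)=y_0\,\phi(t)$ with a fixed $\phi\in W_2^1[0,T_1]$ satisfying $\phi(0)=1$ and $\phi\equiv0$ near $T_1$, and $y_j^{*}\equiv0$; this choice of $\phi$ makes the history, continuity, and damping conditions hold automatically, and $\|y^{*}\|_{\mathcal{H}}\le C_0|y_0|$. Writing the solution as $y=y^{*}+z$ with $z\in\mathcal{H}_0$, the Euler equation gives $B(z,v)=-B(y^{*},v)$ for all $v\in\mathcal{H}_0$; testing with $v=z$ and using coercivity together with the boundedness constant $M$ of $B$,
$$
\gamma\|z\|_{\mathcal{H}}^2\le B(z,z)=-B(y^{*},z)\le M\,\|y^{*}\|_{\mathcal{H}}\,\|z\|_{\mathcal{H}},
$$
so $\|z\|_{\mathcal{H}}\le(M/\gamma)C_0|y_0|$ and therefore $\|y\|_{\mathcal{H}}\le\|y^{*}\|_{\mathcal{H}}+\|z\|_{\mathcal{H}}\le C|y_0|$, which is the asserted bound. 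The delicate point throughout is the coercivity step, that is, ruling out that the delay terms in $K$ destroy the lower bound; the semi-Fredholm argument isolates exactly this difficulty, reducing it to the injectivity already guaranteed by the Volterra structure.
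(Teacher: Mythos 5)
Your proposal is correct, and its overall architecture coincides with the paper's (the paper proves Theorem~\ref{T3} as the special case $d=1$ of Theorem~\ref{T5}): pass to the weak formulation $B(y,v)=0$ on the homogeneous subspace, show $B$ is bounded and coercive there, invoke Riesz/Lax--Milgram, and extract the estimate from an explicit affine lift depending linearly on $y_0$ (the paper's $\Phi$, your $y^{*}$). The one place where you genuinely diverge is the coercivity step, which is also where the paper concentrates its effort (Lemma~\ref{L4}). The paper first derives a G\aa rding-type inequality
$$
\tfrac{\alpha C_3}{3}\,\|w\|_{W_2^1}^{2}\;\le\;\mathcal{J}(w)+K\|w\|_{L_2}^{2},
$$
from the Poincar\'e inequality on the tree and a change of variables in the delayed terms, and then runs an explicit contradiction--compactness argument: a normalized sequence with $\mathcal{J}(w_{(n)})\to 0$ has an $L_2$-convergent subsequence, the G\aa rding inequality upgrades this to a Cauchy sequence in $W_2^1$, and the limit solves the homogeneous Cauchy problem and hence vanishes by the Volterra argument. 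You package exactly the same three ingredients --- the Poincar\'e lower bound for the principal part $D$, Rellich compactness of the zero-order and delay part $K$, and injectivity of $L=D+K$ via the Volterra structure --- into the abstract stability of upper semi-Fredholm operators under compact perturbation. The two arguments are logically interchangeable (the paper's proof is essentially the standard proof of the abstract fact you cite), so there is no gap; yours is shorter if the Fredholm perturbation theorem is taken as known, while the paper's is self-contained and elementary. One minor point to tidy: with your lift ($\phi\equiv 0$ only \emph{near} $T_1$ rather than on all of $[q^{-1}T_1,T_1]$), the expressions $\ell_j y^{*}$ for $j\ge 2$ pick up nonzero history contributions from $\phi$ on $(q^{-1}T_1,T_1)$ through condition (\ref{eq_s4}); this is harmless since $B(y^{*},\cdot)$ remains a bounded functional of norm $O(|y_0|)$, but the paper's choice of $\Phi_1$ vanishing on the whole of $[q^{-1}T_1,T_1]$ kills these terms and reduces $B(\Phi,w)$ to a single integral over $e_1$.
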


These theorems establish the existence and uniqueness of an optimal trajectory $\left[y_1, y_2,\dots,y_m\right]$ with account of all possible scenarios $e_2,\dots,e_m$ simultaneously. Substituting $\left[y_1, y_2,\dots,y_m\right]$ into the equations (\ref{eq_s1}) and (\ref{eq_s3}), one can obtain the corresponding optimal control $\left[u_1, u_2,\dots,u_m\right]$.

Below, we generalize Theorems \ref{T2} and \ref{T3} to the case of an arbitrary tree (Theorems \ref{T4} and \ref{T5}). 
While the outgoing boundary edges of the tree are associated with temporal rays, we actually deal with a compact tree  $\mathcal{T}$ obtained by trimming the infinite edges at certain points $T_j$.

\section{Statement of the variation problem on a tree}\label{sec3}

Consider a compact tree $\mathcal{T}$ with the set of edges $\{e_1, \ldots, e_m\}$, internal vertices $\{v_1, \ldots, v_d\}$, and boundary vertices $\{v_0, v_{d+1}, \ldots, v_m\}$.
The vertex $v_0$ is labeled as the root.
The tree settings adopted in this work follow the structure described in \cite{bib5}. Under these settings, each edge $e_j$, $j = \overline{1,m}$, emanates from the corresponding vertex $v_{k_j}$ and terminates at $v_j$, where $k_1 = 0$. 
For each $j = \overline{0, d}$, the set $\left\{e_\nu\right\}_{\nu \in V_j}$, where
\begin{equation}
V_j:=\left\{\nu: k_\nu=j\right\}, 
\label{eq_t1}
\end{equation}
coincides with the set of edges emanating from the vertex $v_j$. 
The unique simple path between a vertex $v_j$ and the root is given by the chain of edges $\{e_{k_j^{< \nu >}}\}_{\nu = \overline{0,\nu_j}}$, where $k_j^{<0>} := j$ and $k_j^{<\nu+1>} := k_{k_j^{<\nu>}}$ for $\nu=\overline{0, \nu_j}$; the index $\nu_j$ is defined by $k_j^{<\nu_j>} = 1$.

Each edge $e_j$ is parametrized by variable $t \in\left[0, T_j\right]$, where $t=0$ corresponds to its initial vertex $v_{k_j}$ and $t=T_j$ to its terminal vertex $v_j$.
By a function $y$ on $\mathcal{T}$, we mean an $m$-tuple $y=\left[y_1, \ldots, y_m\right]$ whose component $y_j$ is defined on the edge $e_j$, that is, $y_j=y_j\left(t\right)$, $t \in\left[0, T_j\right]$.

Throughout the rest of the paper, we use the notation
$$
q_j\left(t\right) := q^{-1}\left(t - (q - 1)\tilde{T}_j\right), \quad j =\overline{1, m},
$$
where $\tilde{T}_j := \sum_{\nu=1}^{\nu_j} T_{k_j^{<\nu>}}$ and $q > 1$. In particular, we have $\tilde{T}_1 = 0$ and $q_1(t)=q^{-1}t$.

For definiteness, let 
\begin{equation}
T_j>\left(q-1\right)\tilde{T}_j, \quad j=\overline{2, m}.
\label{eq_t-1}
\end{equation}
Consider the following Cauchy problem on $\mathcal{T}$: 
\begin{equation}
\ell_j y\left(t\right):=y_j^{\prime}\left(t\right)+b_j y_j\left(t\right)+c_j y_j\left(q_j\left(t\right)\right)=u_j\left(t\right),\quad 0<t<T_j,\quad j=\overline{1, m},
\label{eq_t2}
\end{equation}
\begin{equation} 
y_j\left(t\right)=y_{k_j}\left(t+T_{k_j}\right),\quad t \in\left(q_j\left(0\right), 0\right),\quad j=\overline{2, m}, 
\label{eq_t3}
\end{equation}
\begin{equation}
y_j\left(0\right)=y_{k_j}\left(T_{k_j}\right),\quad j=\overline{2, m}, 
\label{eq_t4}
\end{equation}
\begin{equation}
y_1\left(0\right)=y_0\in \mathbb{R}, 
\label{eq_t5}
\end{equation}
where  $b_j, c_j \in \mathbb{R}$ and $u_j \in L_2\left(0, T_j\right)$ for $j=\overline{1, m}$. While the $j$th equation in (\ref{eq_t2}) is defined on the edge $e_j$ of $\mathcal{T}$, relations (\ref{eq_t4}) become matching conditions at the internal vertices. Relations (\ref{eq_t3}) are initial-function conditions for all equations in (\ref{eq_t2}) except the first one. 
These conditions mean that the delay propagates through all internal vertices. Condition (\ref{eq_t5}) defines the initial state of the system for the entire tree.

Since $\tilde{T}_j = \tilde{T}_{k_j} + T_{k_j}$, we can observe that
$$
q_j(0) + T_{k_j} = \left(q^{-1}-1\right)\left(\tilde{T}_{k_j}+T_{k_j}\right)+T_{k_j} = q_{k_j}\left(T_{k_j}\right), \quad k_j=\overline{1, d}.
$$
This, together with  (\ref{eq_t-1}), shows that the argument $t+T_{k_j}$ in the right-hand side of (\ref{eq_t3}) is positive. 

We note that the problem (\ref{eq_s1})-(\ref{eq_s5}), obviously, coincides with the problem (\ref{eq_t2})-(\ref{eq_t5}) for $d = 1$. 
As in the preceding section, one can show that the latter problem also has a unique solution
$$
\left[y_1,\dots,y_m\right]\in\bigoplus_{j=1}^{m} W_2^1\left[0,T_j\right].
$$

\begin{example}
Let $m=d+1$. In this case, we have $\tilde{T}_j=\sum_{\nu=1}^{j-1} T_{\nu}$, $j=\overline{1, m}$.  Introduce the notations
$$
y(t) :=y_j\left(t-\tilde{T}_j\right),\quad u(t) :=u_j\left(t-\tilde{T}_j\right),\quad \tilde{T}_j\leq t\leq \tilde{T}_j+T_j, \quad j=\overline{1, m}.
$$
Additionally, suppose that  $b:=b_1=\dots=b_m$ and $c:=c_1=\dots=c_m$. Then the Cauchy problem (\ref{eq_t2})-(\ref{eq_t5}) takes the form (\ref{eq_i1}), (\ref{eq_i2}) with $a = 0$. 
\end{example}

Analogously to the preceding section, we intend to find a control function
$$u=\left[u_1, \ldots, u_m\right] \in L_2\left(\mathcal{T}\right):=\bigoplus_{j=1}^m L_2\left(0, T_j\right)$$
that leads to the equilibrium state
\begin{equation} 
y_j\left(t\right)=0, \quad t \in\left[q_j\left(T_j\right), T_j\right],\quad j=\overline{d+1, m} , 
\label{eq_t6}
\end{equation}
and minimizes all $\left\|u_j\right\|_{L_2\left(0, T_j\right)}^2$ with some positive weights $\alpha_j$, $j=\overline{1,m}$.

Thus, we arrive at the variational problem:
\begin{equation}
\mathcal{J}\left(y\right):=\sum_{j=1}^m \alpha_j \int_0^{T_j}\left(\ell_j y\left(t\right)\right)^2 d t \rightarrow \min 
\label{eq_t7}
\end{equation}
for the functions $y=\left[y_1, \ldots, y_m\right]$ defined on $\mathcal{T}$ under conditions (\ref{eq_t3})-(\ref{eq_t6}).

For brevity, we introduce the designation $\ell y:=\left[\ell_1 y, \ldots, \ell_m y\right]$ and agree that taking $\mathcal{J}\left(y\right)$ and $\ell y$ as well as $\ell_j y$ for $j=\overline{2, m}$ of any function $y$ on $\mathcal{T}$ automatically means the application of the initial-function conditions (\ref{eq_t3}).

For brevity, we introduce the designation $\ell y:=\left[\ell_1 y, \ldots, \ell_m y\right]$. We note that conditions (\ref{eq_t3}) do not restrict the function $y = [y_1, \ldots, y_m]$ and agree that taking  $\mathcal{J}\left(y\right)$ and $\ell y$ as well as $\ell_j y$ for $j=\overline{2, m}$ of any function $y$ on $\mathcal{T}$ automatically means the application of the initial-function conditions (\ref{eq_t3}).

\section{Reduction to a boundary value problem}\label{sec4}

Consider the real Hilbert space $W_2^k\left(\mathcal{T}\right):= \bigoplus_{j=1}^m W_2^k\left[0, T_j\right]$ with the natural inner product
$$
\left(y, z\right)_{W_2^k\left(\mathcal{T}\right)}:=\sum_{j=1}^m\left(y_j, z_j\right)_{W_2^k\left[0, T_j\right]},
$$
where $y=\left[y_1, \ldots, y_m\right]$ and $z=\left[z_1, \ldots, z_m\right]$, while 
$$
\left(f, g\right)_{W_2^k[a, b]}=\sum_{v=0}^k\left(f^{\left(v\right)}, g^{\left(v\right)}\right)_{L_2\left(a, b\right)}
$$ 
is the inner product in $W_2^k[a, b]$ and $\left(\cdot, \cdot\right)_{L_2\left(a, b\right)}$ is the one in $L_2\left(a, b\right)$. 

Denote by $\mathcal{W}$ the closed subspace of $W_2^1\left(\mathcal{T}\right)$ consisting of all $m$-tuples $\left[y_1, \ldots, y_m\right] \in W_2^1\left(\mathcal{T}\right)$ that obey the matching conditions (\ref{eq_t4}), the target conditions (\ref{eq_t6}), and $y_1(0)=0$.

We also introduce a subspace
$$W_2^k\left(\widetilde{\mathcal{T}}\right):=\bigoplus_{j=1}^d W_2^k\left[0, T_j\right] \oplus \bigoplus_{j=d+1}^m W_2^k\left[0, q_j\left(T_j\right)\right],$$
which differs from $W_2^k\left(\mathcal{T}\right)$ only by replacing $T_j$ with $q_j\left(T_j\right)$ for $j=\overline{d+1,m}$.

\begin{lemma}
If $y \in W_2^1\left(\mathcal{T}\right)$ is a solution of the variational problem (\ref{eq_t3})--(\ref{eq_t7}), then
\begin{equation}
B(y, w):=\sum_{j=1}^m \alpha_j \int_0^{T_j} \ell_j y(t) \ell_j w(t) d t=0 \quad \forall w \in \mathcal{W}. 
\label{eq_t8}
\end{equation}

Conversely, if $y \in W_2^1\left(\mathcal{T}\right)$ obeys (\ref{eq_t4})--(\ref{eq_t6}) and (\ref{eq_t8}), then $y$ is a solution of (\ref{eq_t3})--(\ref{eq_t7}).
\end{lemma}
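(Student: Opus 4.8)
The plan is to treat this as the standard Euler (first-order optimality) characterization for minimizing a nonnegative quadratic functional over an affine subspace. The starting observation is that each $\ell_j$ is a \emph{linear} operator on $W_2^1(\mathcal{T})$: with the convention that the values $y_j(q_j(t))$ for $q_j(t)<0$ are read off the parent edge through the initial-function relation (\ref{eq_t3}), every term $y_j'(t)+b_j y_j(t)+c_j y_j(q_j(t))$ depends linearly on the tuple $y$. Consequently $B(\cdot,\cdot)$ is a symmetric bilinear form with $\mathcal{J}(y)=B(y,y)$, and $B(w,w)=\sum_{j=1}^m\alpha_j\int_0^{T_j}(\ell_j w)^2\,dt\ge 0$ since all $\alpha_j>0$. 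A short change-of-variables bound, using that $q_j$ is a contraction with factor $q^{-1}$ (so that $\int_0^{T_j}(y_j(q_j(t)))^2\,dt\le q\,\|y_j\|_{L_2}^2$, and similarly for the branch governed by (\ref{eq_t3})), shows $\ell_j y\in L_2(0,T_j)$ for every $y\in W_2^1(\mathcal{T})$; hence $B$ is bounded and $\mathcal{J}$ is finite on the whole space.

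For the forward implication, let $y$ be a minimizer and fix $w\in\mathcal{W}$. First I would check that $y+\varepsilon w$ stays admissible for every $\varepsilon\in\mathbb{R}$: the relations (\ref{eq_t4}) and (\ref{eq_t6}) are linear and homogeneous, so they are preserved under adding $\varepsilon w$ with $w\in\mathcal{W}$, while the single inhomogeneous constraint (\ref{eq_t5}) is preserved because $(y+\varepsilon w)_1(0)=y_0+\varepsilon w_1(0)=y_0$ thanks to $w_1(0)=0$. Thus $\phi(\varepsilon):=\mathcal{J}(y+\varepsilon w)$ attains its minimum at $\varepsilon=0$. Expanding by bilinearity gives $\phi(\varepsilon)=\mathcal{J}(y)+2\varepsilon B(y,w)+\varepsilon^2 B(w,w)$, a scalar quadratic whose stationarity at $0$ forces $\phi'(0)=2B(y,w)=0$. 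As $w\in\mathcal{W}$ was arbitrary, (\ref{eq_t8}) follows.

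For the converse, suppose $y$ obeys (\ref{eq_t4})--(\ref{eq_t6}) together with (\ref{eq_t8}), and let $z$ be any competitor satisfying (\ref{eq_t3})--(\ref{eq_t6}). The key step is that the perturbation $w:=z-y$ lies in $\mathcal{W}$: the linear homogeneous conditions (\ref{eq_t4}) and (\ref{eq_t6}) are inherited by the difference, while $w_1(0)=z_1(0)-y_1(0)=y_0-y_0=0$ by (\ref{eq_t5}). Writing $z=y+w$ and expanding, $\mathcal{J}(z)=\mathcal{J}(y)+2B(y,w)+B(w,w)=\mathcal{J}(y)+B(w,w)\ge\mathcal{J}(y)$, where the middle term vanishes by (\ref{eq_t8}) and the last is nonnegative. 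Hence $y$ minimizes $\mathcal{J}$, i.e. it solves (\ref{eq_t3})--(\ref{eq_t7}).

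The only genuinely delicate point—as opposed to the routine quadratic expansion—is the bookkeeping that makes the affine structure exact. I expect the main care to go into verifying that the admissible set is precisely a coset $y^{*}+\mathcal{W}$, which requires that \emph{every} defining relation except (\ref{eq_t5}) be linear and homogeneous, and that the convention attached to (\ref{eq_t3}) really turns $\ell_j$ into an honest linear map, so that $\ell_j(y+\varepsilon w)=\ell_j y+\varepsilon\,\ell_j w$ with $\ell_j w$ itself computed through (\ref{eq_t3}) for $w$. Once this identification is pinned down, both implications collapse to the one-line polarization argument above.
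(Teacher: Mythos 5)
Your proposal is correct and follows essentially the same route as the paper: expand $\mathcal{J}(y+sw)$ as a quadratic in $s$ using the symmetric bilinear form $B$, derive $B(y,w)=0$ from first-order optimality, and for the converse use that the admissible set is the coset $y+\mathcal{W}$ together with $\mathcal{J}(w)\ge 0$. The extra checks you flag (linearity of $\ell_j$ under the convention attached to (\ref{eq_t3}), boundedness of $B$) are handled implicitly in the paper and do not change the argument.
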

\begin{proof}
Let $y \in W_2^1\left(\mathcal{T}\right)$ be a solution of (\ref{eq_t3})--(\ref{eq_t7}). Then for any $w \in \mathcal{W}$, the sum $y+s w$ belongs to $W_2^1\left(\mathcal{T}\right)$ whenever $s \in \mathbb{R}$, and obeys conditions (\ref{eq_t4})--(\ref{eq_t6}). Put
$$F(s):=\mathcal{J}(y+s w)=\mathcal{J}(y)+2 s B(y, w)+s^2 \mathcal{J}(w).$$
Since $\mathcal{J}(y+s w) \geq \mathcal{J}(y)$ for all $s \in \mathbb{R}$, we have $0=F^\prime(0)=2B(y,w)$.

Conversely, for any $y \in W_2^1\left(\mathcal{T}\right)$ obeying (\ref{eq_t4})--(\ref{eq_t6}), the fulfilment of (\ref{eq_t8}) implies 
$$\mathcal{J}(y+w)=\mathcal{J}(y)+2 B(y, w)+\mathcal{J}(w) \geq \mathcal{J}(y)$$
for all $w\in\mathcal{W}$, which gives (\ref{eq_t7}). 
\end{proof}

We transform relation (\ref{eq_t8}) by changing variables in the terms involving $w_j\left(q_j\left(t\right)\right)$. Then, in accordance with the definition in (\ref{eq_t2}), the expression for $B(y, w)$ takes the following form:
\begin{multline*}
B\left(y, w\right) = \displaystyle\sum_{j=1}^m\alpha_j\displaystyle\int_0^{T_j}\ell_jy\left(t\right) w_j^\prime\left(t\right)dt 
\\
+\displaystyle\sum_{j=1}^m \alpha_j b_j\displaystyle\int_0^{T_j}\ell_jy\left(t\right) w_j\left(t\right)dt+ q\sum_{j=1}^m \alpha_j c_j \int^{q_j\left(T_j\right)}_{q_j\left(0\right)}\ell_jy\left(q^{-1}_j(t)\right)w_j(t)dt,
\end{multline*}
where $q_j^{-1}(t) = qt + (q - 1)\tilde{T}_j$ is the inverse of $q_j(t)$.
Further, applying (\ref{eq_t3}) to $w~=~\left[w_1,\dots,w_m\right]\in\mathcal{W}$, one can represent
\begin{multline*}
\int^{q_j\left(T_j\right)}_{q_j\left(0\right)}\ell_jy\left(q^{-1}_j(t)\right)w_j(t)dt = \int^{q_j\left(T_j\right)}_{0}\ell_jy\left(q^{-1}_j(t)\right)w_j(t)dt
\\
+\int^{T_{k_j}}_{q_{k_j}\left(T_{k_j}\right)}\ell_jy\left(q^{-1}_j\left(t\right)-qT_{k_j}\right)w_{k_j}\left(t\right)dt, \quad j=\overline{1, m},
\end{multline*}
where $T_0=0$, $w_0=0$ and $q_0=0$. According to (\ref{eq_t1}), we have the summation rule
%\begin{equation}
$$
\sum_{j=2}^m A_j=\sum_{j=1}^d \sum_{\nu \in V_j} A_\nu \quad \left(\text { for any values } A_2, \ldots, A_m\right).
$$
%\label{eq_t9}
%\end{equation}

Hence, multiplying the preceding relation with $\alpha_j c_j$ and then summing up, we obtain
\begin{multline*}
\sum_{j=1}^m \alpha_j c_j \int^{
q_j\left(T_j\right)
}_{
q_j\left(0\right)
}\ell_jy\left(
q^{-1}_j\left(t\right)
\right)w_j(t)dt
=\sum_{j=1}^m \alpha_j c_j \int_0^{
q_j\left(T_j\right)
} \ell_j y\left(
q^{-1}_j\left(t\right)
\right) w_j(t) d t
\\
+\sum_{j=1}^d \sum_{\nu \in V_j} \alpha_\nu c_\nu \int_{
q_j\left(T_j\right)
}^{T_{j}}\ell_\nu y\left(
q^{-1}_\nu\left(t\right)-qT_j
\right) w_j(t) d t.
\end{multline*}

Thus one can rewrite (\ref{eq_t8}) in the equivalent form
\begin{multline}
B(y, w)=\sum_{j=1}^m\left(\alpha_j \int_0^{T_j} \ell_j y(t) w_j^{\prime}(t) d t\right.
\\
\left.+\int_0^{T_j}\left(\alpha_j b_j \ell_j y(t)+\tilde{\ell}_j y(t)\right) w_j(t) d t\right)=0, \quad w \in \mathcal{W},
\label{eq_t10}
\end{multline}
where
\begin{equation} 
\tilde{\ell}_j y\left(t\right) = 
\begin{cases}
q \alpha_jc_j\ell_j y\left( q_j^{-1}(t) \right), & 0<t<q_j\left(T_j\right), \;\;\;\quad j=\overline{1, m},
\\
& 
\\
q \displaystyle\sum_{\nu \in V_j} \alpha_\nu c_\nu\ell_\nu y\left(q_\nu^{-1}(t)-qT_j\right), & q_j\left(T_j\right)< t<T_j,\;\quad j=\overline{1, d}.
\end{cases} 
\label{eq_t11}
\end{equation}
while for $q_j\left(T_j\right)< t<T_j$, and $j=\overline{d+1, m}$ the expression $\tilde{\ell}_j y\left(t\right)$ can be defined as zero.

Denote by $\mathcal{B}$ the boundary value problem for the second-order functional--differential equations
\begin{equation}
\mathcal{L}_j y\left(t\right):=-\alpha_j\left(\ell_jy\right)^{\prime}\left(t\right)+\alpha_j b_j \ell_j y\left(t\right) + \widetilde{\ell_j}y\left(t\right)=0,\quad 0<t<l_j,\quad j=\overline{1,m},
\label{eq_t12}
\end{equation}
under conditions (\ref{eq_t3})--(\ref{eq_t6}) along with the Kirchhoff--type conditions
\begin{equation}
\alpha_jy^\prime_j\left(T_j\right)+\beta_jy_j\left(T_j\right)+\gamma_jy_j\left(q_j\left(T_j\right)\right)=\sum_{\nu \in V_j}\alpha_\nu y_\nu^\prime(0), \quad j=\overline{1, d},
\label{eq_t13}
\end{equation}
where the expressions $\tilde{\ell}_j y(t)$ are defined in (\ref{eq_t11}) and
\begin{equation}
l_j:=T_j,\quad j=\overline{1, d}, \quad l_j:= q_j\left(T_j\right), \quad j=\overline{d+1, m},
\label{eq_t14}
\end{equation}
\begin{equation}
\beta_j:=\alpha_j b_j-\sum_{\nu \in V_j} \alpha_\nu b_\nu, \quad \gamma_j:=\alpha_j c_j-\sum_{\nu \in V_j} \alpha_\nu c_\nu,\quad j=\overline{1, d}.
\label{eq_t15}
\end{equation}

The following lemma holds.

\begin{lemma}\label{L2}
If $y \in W_2^1\left(\mathcal{T}\right)$ obeys conditions (\ref{eq_t4})-(\ref{eq_t6}) and (\ref{eq_t8}), then $y \in W_2^2\left(\widetilde{\mathcal{T}}\right)$ and it solves the boundary value problem $\mathcal{B}$. Conversely, any solution y of $\mathcal{B}$ obeys (\ref{eq_t8}).
\end{lemma}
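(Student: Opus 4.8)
The plan is to work from the equivalent form (\ref{eq_t10}) of the weak identity and to proceed in two stages: first a localization argument that simultaneously yields the equations (\ref{eq_t12}) and the claimed $W_2^2$-regularity, and then a global integration by parts that produces the Kirchhoff-type conditions (\ref{eq_t13}). The converse will simply reverse the second stage.

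For the forward implication, I would first test (\ref{eq_t10}) against those $w \in \mathcal{W}$ supported in the interior of a single edge $e_j$, that is, with $w_\nu \equiv 0$ for $\nu \neq j$ and $w_j \in C_0^\infty(0, l_j)$ (for a boundary edge, extended by zero onto $[q_j(T_j), T_j]$, which is admissible by (\ref{eq_t6})). One checks that such $w$ indeed lies in $\mathcal{W}$, so that only the $j$th summand of (\ref{eq_t10}) survives. The resulting scalar identity states precisely that the distributional derivative of $\alpha_j \ell_j y$ on $(0, l_j)$ equals $\alpha_j b_j \ell_j y + \tilde{\ell}_j y$. Since $\ell_j y \in L_2$ and, after the domain bookkeeping for the expanded arguments $q_j^{-1}(t)$ and $q_\nu^{-1}(t) - qT_j$ in (\ref{eq_t11}), the term $\tilde{\ell}_j y$ also lies in $L_2$, we conclude $\alpha_j \ell_j y \in W_2^1$, hence $\ell_j y \in W_2^1$ (as $\alpha_j > 0$). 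Because $q_j$ is affine and $y_j \in W_2^1$, this is equivalent to $y_j' \in W_2^1$, i.e. $y_j \in W_2^2$ on the relevant interval; collecting over $j$ gives $y \in W_2^2(\widetilde{\mathcal{T}})$ and, rewritten, the equations (\ref{eq_t12}).

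With this regularity secured, I would return to (\ref{eq_t10}) with an arbitrary $w \in \mathcal{W}$ and integrate the first term by parts on each edge. The interior integrals cancel against the zeroth-order terms by virtue of (\ref{eq_t12}), leaving only the endpoint contributions $\sum_j \alpha_j [\ell_j y \, w_j]_0^{l_j}$. At the root and at the boundary vertices these vanish because $w_1(0)=0$ and $w_j(q_j(T_j))=0$. At each internal vertex $v_j$ the continuity built into $\mathcal{W}$ through (\ref{eq_t4}) forces $w_\nu(0)=w_j(T_j)$ for every $\nu \in V_j$, so the remaining boundary terms regroup as $w_j(T_j)\big(\alpha_j \ell_j y(T_j) - \sum_{\nu \in V_j}\alpha_\nu \ell_\nu y(0)\big)$. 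Since $w_j(T_j)$ is arbitrary, the bracket must vanish; expanding $\ell_j y(T_j)$ and $\ell_\nu y(0)$ and using the matching relations $y_\nu(0)=y_j(T_j)$, $y_\nu(q_\nu(0))=y_j(q_j(T_j))$ together with the identity $q_\nu(0)+T_j=q_j(T_j)$ turns this bracket into (\ref{eq_t13}) with $\beta_j,\gamma_j$ as in (\ref{eq_t15}). The converse then reverses this computation: for any solution $y$ of $\mathcal{B}$ and any $w\in\mathcal{W}$, integrating (\ref{eq_t10}) by parts, cancelling the interior terms by (\ref{eq_t12}) and the vertex terms by (\ref{eq_t13}), yields $B(y,w)=0$, which is (\ref{eq_t8}).

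I expect the main obstacle to be the regularity bootstrap together with its bookkeeping, rather than the integration by parts. One must verify carefully that the nonlocal, argument-expanding term $\tilde{\ell}_j y$ of (\ref{eq_t11}) genuinely lies in $L_2$ on the correct intervals, and in particular that its piecewise definition across the breakpoint $t=q_j(T_j)$ for internal edges still produces $\alpha_j \ell_j y \in W_2^1(0,T_j)$ globally, i.e. a mere kink and not a jump in $\ell_j y$. A second delicate point is checking that the localizing test functions really belong to $\mathcal{W}$ despite the matching, target and initial constraints, and then correctly identifying all edge-endpoint values meeting at an internal vertex, which hinges on applying (\ref{eq_t4}) and (\ref{eq_t3}) to the test function $w$ and not merely to $y$.
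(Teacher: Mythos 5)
Your argument is correct and follows essentially the same route as the paper: pass to the transformed identity (\ref{eq_t10}), extract the interior equations together with $\ell_j y\in W_2^1[0,l_j]$ (hence $y\in W_2^2(\widetilde{\mathcal{T}})$), then recover the Kirchhoff conditions (\ref{eq_t13}) from the surviving vertex terms after integration by parts, and reverse that computation for the converse. The only difference is one of presentation: where you localize with test functions supported on a single edge, the paper invokes Lemma~2 of \cite{bib5} to obtain $\ell_j y\in W_2^1[0,l_j]$ together with the vertex relation $\alpha_j\ell_j y(T_j)=\sum_{\nu\in V_j}\alpha_\nu\ell_\nu y(0)$ in one stroke, and only afterwards reads off the equations (\ref{eq_t12}) from the identity (\ref{eq_t18}).
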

\begin{proof}
Taking into account that (\ref{eq_t8}) is equivalent to (\ref{eq_t10}) and applying Lemma 2 from \cite{bib5} to (\ref{eq_t10}) under the settings (\ref{eq_t14}), we get  $\ell_j y(t) \in W_2^1\left[0, l_j\right]$ for $j=\overline{1, m}$ and
\begin{equation}
\alpha_j \ell_j y\left(T_j\right)=\sum_{\nu \in V_j} \alpha_\nu \ell_\nu y(0),\quad j=\overline{1, d}.
\label{eq_t16}
\end{equation}

Recalling the definition of $\ell_j y$ in (\ref{eq_t2}) and using (\ref{eq_t3})-(\ref{eq_t4}) and (\ref{eq_t14}), we obtain the inclusion $y_j^{\prime}\left(t\right)=\ell_j y\left(t\right)-b_j y_j\left(t\right)-c_j y_j\left(q_j\left(t\right) \right) \in W_2^1\left[0, l_j\right]$ for $j=\overline{1, m}$, that is, $y \in W_2^2\left(\widetilde{\mathcal{T}}\right)$. Hence, we can rewrite (\ref{eq_t16}) in the equivalent form
\begin{multline}
y_j^{\prime}\left(T_j\right)+b_j y_j\left(T_j\right)+c_j y_j\left(q_j\left(T_j\right)\right)=
\\
=\frac{1}{\alpha_j} \sum_{\nu \in V_j} \alpha_\nu\left(y_\nu^{\prime}(0)+b_\nu y_\nu(0)+c_\nu y_\nu\left(q_\nu\left(0\right)\right)\right), \quad j=\overline{1, d},
\label{eq_t17}
\end{multline}
where, by virtue of (\ref{eq_t3}), the right-hand limits 
$$
y_\nu\left(q_\nu\left(0\right)\right):=\lim _{t \rightarrow\left(q_\nu\left(0\right)\right)^{+}} y_\nu(t)=\lim _{t \rightarrow\left(q_\nu\left(0\right) \right)^{+}}y_{k_\nu}\left(t+T_{k_\nu}\right)=y_{k_\nu}\left(q_{k_\nu}(T_{k_\nu})\right)=y_{j}\left(q_{j}(T_{j})\right),
$$
obviously, exist. Thus, relation (\ref{eq_t17}) along with (\ref{eq_t1}), (\ref{eq_t3}) and (\ref{eq_t4}) give (\ref{eq_t13}) with (\ref{eq_t15}).

Finally, integrating by parts in (\ref{eq_t10}) and using (\ref{eq_t4}), (\ref{eq_t6}) and the definition in (\ref{eq_t12}), we arrive at
\begin{equation}
B(y, w)=\sum_{j=1}^d w_j\left(l_j\right)\left(\alpha_j \ell_j y\left(l_j\right)-\sum_{\nu \in V_j} \alpha_\nu \ell_\nu y(0)\right)+\sum_{j=1}^m \int_0^{l_j} \mathcal{L}_j y(t) w_j(t) d t=0
\label{eq_t18}
\end{equation}
which, by virtue of (\ref{eq_t16}) and the variety of $w_j(t)$, gives (\ref{eq_t12}).

Conversely, let $y$ be a solution of the problem $\mathcal{B}$. Since (\ref{eq_t13}) is equivalent to (\ref{eq_t16}), the second equality in (\ref{eq_t18}) holds, which gives (\ref{eq_t8}).
\end{proof}

Combining Lemmas 1 and 2, we arrive at the main result of this section.

\begin{theorem}\label{T4}
A function $y\in W_2^1\left(\mathcal{T}\right)$ is a solution of the variational problem (\ref{eq_t3})--(\ref{eq_t7}) if and only if it belongs to $W_2^2\left(\widetilde{\mathcal{T}}\right)$ and solves the boundary value problem $\mathcal{B}$.
\end{theorem}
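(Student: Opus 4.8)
The plan is to obtain Theorem \ref{T4} as a purely logical composition of the two preceding lemmas, using the weak identity (\ref{eq_t8}) together with the standing constraints (\ref{eq_t4})--(\ref{eq_t6}) as the common pivot shared by both. Concretely, Lemma 1 characterizes a minimizer of $\mathcal{J}$ over the admissible set as a function obeying (\ref{eq_t4})--(\ref{eq_t6}) and the orthogonality relation $B(y,w)=0$ for all $w\in\mathcal{W}$, while Lemma 2 identifies exactly those functions with the elements of $W_2^2(\widetilde{\mathcal{T}})$ that solve $\mathcal{B}$. Chaining these two equivalences eliminates the intermediate object (\ref{eq_t8}) and leaves precisely the stated biconditional.

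For the forward direction I would argue as follows. Suppose $y\in W_2^1(\mathcal{T})$ solves the variational problem (\ref{eq_t3})--(\ref{eq_t7}). By admissibility it already satisfies the standing constraints (\ref{eq_t4})--(\ref{eq_t6}) (the initial-function relation (\ref{eq_t3}) is automatic and imposes nothing, as noted after (\ref{eq_t7})), and the first part of Lemma 1 yields (\ref{eq_t8}). Thus the hypotheses of the first part of Lemma 2 are met, and it supplies at once both the regularity upgrade $y\in W_2^2(\widetilde{\mathcal{T}})$ and the fact that $y$ solves $\mathcal{B}$, including the Kirchhoff-type conditions (\ref{eq_t13}). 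Since $\mathcal{B}$ carries the same standing conditions (\ref{eq_t3})--(\ref{eq_t6}), nothing further must be checked.

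The converse runs in reverse. If $y\in W_2^1(\mathcal{T})$ belongs to $W_2^2(\widetilde{\mathcal{T}})$ and solves $\mathcal{B}$, then by definition it obeys (\ref{eq_t3})--(\ref{eq_t6}), hence in particular (\ref{eq_t4})--(\ref{eq_t6}); the converse part of Lemma 2 then gives (\ref{eq_t8}). Feeding this, together with (\ref{eq_t4})--(\ref{eq_t6}), into the converse part of Lemma 1 shows that $y$ minimizes $\mathcal{J}$, that is, solves (\ref{eq_t3})--(\ref{eq_t7}). The only point demanding attention throughout is bookkeeping: that the admissible set of the variational problem and the set of standing conditions built into $\mathcal{B}$ coincide --- both being (\ref{eq_t3})--(\ref{eq_t6}) --- and that the truncation to $\widetilde{\mathcal{T}}$ encoded by $l_j$ in (\ref{eq_t14}) is consistent with the target condition (\ref{eq_t6}), under which $y_j$ vanishes on $[q_j(T_j),T_j]$ for $j=\overline{d+1,m}$.

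Since the theorem is an assembly, I expect no genuine analytic obstacle at this stage: all the substance has already been discharged in Lemma 2, namely the bootstrap from $W_2^1(\mathcal{T})$ to $W_2^2(\widetilde{\mathcal{T}})$ and the extraction of the transmission and Kirchhoff conditions from the weak identity via integration by parts (relations (\ref{eq_t16})--(\ref{eq_t18})). The main care I would take is to invoke each lemma with precisely its stated hypotheses and to verify that the two \emph{intermediate} descriptions match verbatim, so that the chain of equivalences closes without a gap.
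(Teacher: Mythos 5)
Your proposal is correct and matches the paper exactly: Theorem \ref{T4} is stated there with no separate proof beyond the remark ``Combining Lemmas 1 and 2, we arrive at the main result of this section,'' which is precisely the chaining of the two equivalences through the weak identity (\ref{eq_t8}) that you describe. Your extra bookkeeping about the coincidence of the standing conditions and the truncation (\ref{eq_t14}) is a harmless elaboration of the same argument.
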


\section{The unique solvability}\label{sec5}

In this section, we establish the unique solvability of the boundary value problem $\mathcal{B}$ and thus, according to theorem \ref{T4}, of the variational problem (\ref{eq_t3})--(\ref{eq_t7}).

We begin with the following two auxiliary assertions.

\begin{lemma}\label{L3}
There exists $C_1$ such that
\begin{equation}
\|\ell w\|_{L_2(\mathcal{T})}^2 \leq C_1\|w\|_{W_2^1\left(\widetilde{\mathcal{T}}\right)}^2 \quad\forall w \in \mathcal{W}.
\label{eq_us1}
\end{equation}
\end{lemma}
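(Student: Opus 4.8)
The plan is to establish the bound edge by edge: I would control $\int_0^{T_j}\left(\ell_j w(t)\right)^2\,dt$ for each fixed $j$ by a finite combination of $L_2$-norms of pieces of $w$ living on $\widetilde{\mathcal{T}}$, and then sum over $j=\overline{1,m}$. Since $\mathcal{T}$ is finite and the data $b_j,c_j,q,T_j$ are fixed, the resulting constant will automatically be independent of $w$. First I would apply the elementary inequality $\left(\ell_j w\right)^2\le 3\left(w_j'\right)^2+3b_j^2 w_j^2+3c_j^2\,w_j\left(q_j(t)\right)^2$, so that it suffices to bound the three integrals $\int_0^{T_j}\left(w_j'\right)^2 dt$, $\int_0^{T_j} w_j^2\,dt$, and the delay integral $\int_0^{T_j} w_j\left(q_j(t)\right)^2 dt$ separately.

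For the two non-delayed integrals, the only discrepancy is that the left-hand side runs over the whole edge $[0,T_j]$, whereas for boundary edges the target norm on the right lives on $[0,q_j(T_j)]$. This is resolved by the target conditions (\ref{eq_t6}): for $j=\overline{d+1,m}$ we have $w_j\equiv 0$ on $[q_j(T_j),T_j]$, hence $w_j'\equiv 0$ there as well, so both integrals reduce to the truncated domain and are dominated by $\|w_j\|_{W_2^1[0,q_j(T_j)]}^2$. For internal edges $j=\overline{1,d}$ the full edge coincides with the truncated one, and the bound by $\|w_j\|_{W_2^1[0,T_j]}^2$ is immediate.

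The delay term is the crux. Changing the variable via $s=q_j(t)$, i.e. $t=q_j^{-1}(s)=qs+(q-1)\tilde T_j$, gives $\int_0^{T_j} w_j\left(q_j(t)\right)^2 dt=q\int_{q_j(0)}^{q_j(T_j)} w_j(s)^2\,ds$, which I split at $s=0$. The portion over $[0,q_j(T_j)]$ lies in the truncated domain of $e_j$ (recall $q_j(T_j)\le T_j$) and is again dominated by the truncated $L_2$-norm of $w_j$. The portion over $[q_j(0),0]$, present only for $j\ge 2$ since $q_1(0)=0$, carries a negative argument and must be rewritten through the initial-function convention (\ref{eq_t3}): substituting $w_j(s)=w_{k_j}(s+T_{k_j})$ and then $\sigma=s+T_{k_j}$ turns it into $\int_{q_{k_j}(T_{k_j})}^{T_{k_j}} w_{k_j}(\sigma)^2\,d\sigma$, using the identity $q_j(0)+T_{k_j}=q_{k_j}(T_{k_j})$ recorded after (\ref{eq_t5}). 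Since for $j\ge 2$ the parent vertex $v_{k_j}$ is internal, the index $k_j\le d$, so $e_{k_j}$ is a full-length edge of $\widetilde{\mathcal{T}}$ and this integral is bounded by $\|w_{k_j}\|_{L_2[0,T_{k_j}]}^2\le\|w_{k_j}\|_{W_2^1[0,T_{k_j}]}^2$.

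Finally I would sum the edge-wise estimates over $j=\overline{1,m}$. Each norm $\|w_j\|_{W_2^1}^2$ appears once from its own term and at most once for every child of $v_j$; as the tree is finite, each occurs a bounded number of times. Collecting all numerical factors (which depend only on $q$ and on the $b_j,c_j$) yields a single constant $C_1$ with $\|\ell w\|_{L_2(\mathcal{T})}^2\le C_1\|w\|_{W_2^1(\widetilde{\mathcal{T}})}^2$. I expect the main obstacle to be precisely the bookkeeping of the third paragraph: tracking the negative-argument piece of each delay term through (\ref{eq_t3}) onto the parent edge and verifying that every integration domain that arises indeed sits inside $\widetilde{\mathcal{T}}$. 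This is exactly the step that uses the tree structure, namely that internal parent edges are full-length in $\widetilde{\mathcal{T}}$ while boundary edges vanish beyond $q_j(T_j)$; the remaining estimates are routine.
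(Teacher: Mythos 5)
Your proposal is correct and follows essentially the same route as the paper's proof: the elementary inequality $(a_1+a_2+a_3)^2\le 3(a_1^2+a_2^2+a_3^2)$ applied to $\ell_j w$, followed by the change of variables $s=q_j(t)$ in the delay term, splitting at $s=0$ and transferring the negative-argument piece to the parent edge via (\ref{eq_t3}), which is exactly the paper's identity (\ref{eq_us4}). Your explicit use of the target conditions (\ref{eq_t6}) to reduce the integrals on boundary edges to the truncated intervals $[0,q_j(T_j)]$ is a detail the paper leaves implicit, but it is the same argument.
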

\begin{proof}
Using the definition in (\ref{eq_t2}) and the inequality
\begin{equation}
\left(a_1+\ldots+a_n\right)^2 \leq n\left(a_1^2+\ldots+a_n^2\right),\quad a_1, \ldots, a_n \in \mathbb{R},
\label{eq_us2}
\end{equation}
for $n=3$, we obtain
\begin{equation}
\|\ell w\|_{L_2(\mathcal{T})}^2 \leq 3 \sum_{j=1}^m \int_0^{T_j}\left(w_j^{\prime}(t)\right)^2 d t+3 \sum_{j=1}^m b_j^2 \int_0^{T_j} w_j^2(t) d t+3 \sum_{j=1}^m c_j^2 \int_0^{T_j} w_j^2\left(q_j\left(t\right)\right)d t.
\label{eq_us3}
\end{equation}

Then, applying (\ref{eq_t3}) to $w$, we calculate
\begin{equation}
\int_0^{T_j} w_j^2\left(q_j\left(t\right)\right)d t= q\|w_j\|^2_{L_2\left(0, q_j\left(T_j\right)\right)}+q\|w_{k_j}\|^2_{L_2\left(q_{k_j}(T_{k_j}), T_{k_j}\right)}, \quad j=\overline{1, m},
\label{eq_us4}
\end{equation}
where $T_0=0$, $w_0=0$ and $q_0=0$. This identity along with (\ref{eq_us3}) gives (\ref{eq_us1}) with $C_1$ independent of $w$.
\end{proof}

\begin{lemma}\label{L4}
There exists $C_2>0$ such that
$$
\mathcal{J}(w) \geq C_2\|w\|_{W_2^1\left(\widetilde{\mathcal{T}}\right)}^2 \quad \forall w \in \mathcal{W}.
$$
\end{lemma}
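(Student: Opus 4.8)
The plan is to argue by contradiction, exploiting the compactness of the embedding $W_2^1(\widetilde{\mathcal{T}}) \hookrightarrow L_2(\widetilde{\mathcal{T}})$. Suppose the asserted inequality fails for every $C_2>0$. Then there is a sequence $\{w^{(n)}\}\subset\mathcal{W}$ with $\|w^{(n)}\|_{W_2^1(\widetilde{\mathcal{T}})}=1$ and $\mathcal{J}(w^{(n)})\to 0$. I first identify $\mathcal{W}$ with a closed subspace of $W_2^1(\widetilde{\mathcal{T}})$, which is legitimate since the target conditions (\ref{eq_t6}) force $w_j\equiv 0$ on $\left[q_j(T_j),T_j\right]$ for $j=\overline{d+1,m}$, so the $W_2^1(\mathcal{T})$- and $W_2^1(\widetilde{\mathcal{T}})$-norms agree on $\mathcal{W}$.

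Since the closed unit ball of a Hilbert space is weakly compact, after passing to a subsequence I may assume $w^{(n)}\rightharpoonup w$ weakly in $W_2^1(\widetilde{\mathcal{T}})$. The one-dimensional embedding $W_2^1\hookrightarrow C\hookrightarrow L_2$ on each edge is compact, so $w^{(n)}\to w$ strongly in $L_2(\widetilde{\mathcal{T}})$. By Lemma \ref{L3} the map $\ell$ is bounded and linear from $W_2^1(\mathcal{T})$ into $L_2(\mathcal{T})$, hence $\ell w^{(n)}\rightharpoonup \ell w$, and weak lower semicontinuity of the norm gives $\mathcal{J}(w)\le\liminf_n \mathcal{J}(w^{(n)})=0$; since $\alpha_j>0$, this yields $\ell_j w=0$ for all $j=\overline{1,m}$. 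Moreover $\mathcal{W}$, being a closed subspace, is weakly closed, so $w\in\mathcal{W}$; in particular $w$ obeys the matching conditions (\ref{eq_t4}) and $w_1(0)=0$. The relations $\ell_j w=0$ together with $w_1(0)=0$ and (\ref{eq_t4}) constitute exactly the Cauchy problem (\ref{eq_t2})--(\ref{eq_t5}) with vanishing controls $u_j\equiv 0$ and vanishing datum $y_0=0$; by the unique solvability noted in Section \ref{sec3} its only solution is $w\equiv 0$. (Concretely, on $e_1$ the homogeneous Volterra equation has only the trivial solution, which propagates $w_1(T_1)=0$ to the child edges and thus down the whole tree.) Hence $w=0$ and $\|w^{(n)}\|_{L_2(\widetilde{\mathcal{T}})}\to 0$.

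It remains to upgrade this to convergence in $W_2^1$, producing the contradiction. From $\mathcal{J}(w^{(n)})\to 0$ I get $\|\ell_j w^{(n)}\|_{L_2(0,T_j)}\to 0$ for each $j$. The identity (\ref{eq_us4}) expresses $\int_0^{T_j}\bigl(w_j^{(n)}(q_j(t))\bigr)^2\,dt$ through $L_2$-norms of $w^{(n)}$ over subintervals, so $w_j^{(n)}(q_j(\cdot))\to 0$ in $L_2$ as well. Writing $w_j^{\prime(n)}=\ell_j w^{(n)}-b_j w_j^{(n)}-c_j w_j^{(n)}(q_j(\cdot))$ and applying the triangle inequality then gives $\|w_j^{\prime(n)}\|_{L_2}\to 0$. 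Combined with $\|w^{(n)}\|_{L_2(\widetilde{\mathcal{T}})}\to 0$, this forces $\|w^{(n)}\|_{W_2^1(\widetilde{\mathcal{T}})}\to 0$, contradicting $\|w^{(n)}\|_{W_2^1(\widetilde{\mathcal{T}})}=1$.

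I expect the delicate point to be the uniqueness step: one must be sure the weak limit $w$ genuinely satisfies both $\ell w=0$ and every constraint defining $\mathcal{W}$, so that Cauchy uniqueness applies. Weak lower semicontinuity of $\mathcal{J}$ and weak closedness of $\mathcal{W}$ are precisely the tools securing this, while the compact embedding is what converts smallness of $\mathcal{J}$ into genuine $L_2$-smallness of $w$. The other place requiring care is the treatment of the delay terms $w_j(q_j(\cdot))$ under convergence, handled through (\ref{eq_us4}).
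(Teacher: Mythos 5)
Your proof is correct, and while it shares the paper's overall skeleton (contradiction, normalization, compact embedding, Cauchy uniqueness for the homogeneous problem), the technical execution is genuinely different. The paper first establishes a G\aa rding-type inequality $\frac{\alpha C_3}{3}\|w\|_{W_2^1(\widetilde{\mathcal{T}})}^2 \leq \mathcal{J}(w)+K\|w\|_{L_2(\mathcal{T})}^2$ by combining the expansion of $(w_j')^2$ through $\ell_j w$ with a Friedrichs-type inequality $\|w'\|_{L_2(\mathcal{T})}^2 \geq C_3\|w\|_{W_2^1(\widetilde{\mathcal{T}})}^2$ imported from \cite{bib5} (Lemma 5 there); it then shows the normalized minimizing subsequence is Cauchy in $W_2^1(\widetilde{\mathcal{T}})$, takes the strong limit $w_{(0)}$, and only afterwards identifies $\ell w_{(0)}=0$ via Lemma \ref{L3}. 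You instead extract a weak limit, identify it as zero first (via weak lower semicontinuity of $\mathcal{J}$ plus weak closedness of $\mathcal{W}$ and Cauchy uniqueness), and then upgrade to strong $W_2^1$-convergence directly from the identity $w_j'=\ell_j w-b_j w_j-c_j w_j(q_j(\cdot))$ together with (\ref{eq_us4}). Your route buys independence from the external Friedrichs-type inequality of \cite{bib5}, which is the least elementary ingredient of the paper's argument; the paper's route buys a reusable coercivity estimate (\ref{eq_us9}) and avoids invoking weak convergence machinery. Two small points to make explicit if you write this up: justify $\liminf$ of the weighted sum of squared norms term by term (each $\ell_j w^{(n)}\rightharpoonup \ell_j w$ in $L_2(0,T_j)$, and $\liminf$ is superadditive over nonnegative terms), and note that $\ell_j$ applied to elements of $\mathcal{W}$ already incorporates the initial-function convention (\ref{eq_t3}), so that the operator whose weak continuity you use is exactly the one appearing in $\mathcal{J}$.
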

\begin{proof}
To the contrary, let there exist $w_{(n)} \in \mathcal{W}, n \in \mathbb{N}$, such that
$$\mathcal{J}\left(w_{(n)}\right) \leq \frac{1}{n}\left\|w_{(n)}\right\|_{W_2^1\left(\widetilde{\mathcal{T}}\right)}^2.$$
Assuming without loss of generality that $\left\|w_{(n)}\right\|_{W_2^1\left(\widetilde{\mathcal{T}}\right)}=1$, we arrive at the inequalities
\begin{equation}
\mathcal{J}\left(w_{(n)}\right) \leq \frac{1}{n}, \quad n \in \mathbb{N}.
\label{eq_us6}
\end{equation}

Using the definition of $\ell_j y$ and inequality (\ref{eq_us2}) for $n=3$, we obtain
$$\left(w_j^{\prime}(t)\right)^2 \leq 3\left(\left(\ell_j w(t)\right)^2+b_j^2 w_j^2(t)+c_j^2 w_j^2\left(q_j\left(t\right)\right)\right), \quad j=\overline{1, m}.$$
Integrate this inequality from $0$ to $T_j$ and multiply with $\alpha_j$. Then, summing up with respect to $j$ and recalling the definition in (\ref{eq_t7}), we arrive at
\begin{equation}
\frac{1}{3} \sum_{j=1}^m \alpha_j \int_0^{T_j}\left(w_j^{\prime}(t)\right)^2 d t \leq \mathcal{J}(w)+\sum_{j=1}^m \alpha_j \int_0^{T_j}\left(b_j^2 w_j^2(t)+c_j^2 w_j^2\left( q_j\left(t\right) \right)\right) d t.
\label{eq_us7}
\end{equation}
According to Lemma 5 in \cite{bib5}, the following inequality holds:
\begin{equation}
\|w^\prime\|^2_{L_2\left(\mathcal{T}\right)} \geq C_3\|w\|^2_{W_2^1\left(\widetilde{\mathcal{T}}\right)} \quad \forall w\in \mathcal{W},
\label{eq_us8}
\end{equation}
where $C_3>0$, $w^\prime=\left[w_1^\prime,\dots,w_m^\prime\right]$. Combining (\ref{eq_us7}) with (\ref{eq_us4}) and (\ref{eq_us8}), we obtain the estimate
\begin{equation}
\frac{\alpha C_3}{3}\|w\|_{W_2^1\left(\widetilde{\mathcal{T}}\right)}^2 \leq \mathcal{J}(w)+K\|w\|_{L_2(\mathcal{T})}^2, \quad \alpha:=\min _{j=\overline{1, m}} \alpha_j>0.
\label{eq_us9}
\end{equation}

Further, by virtue of the compactness of the embedding operator from $W_2^1(\mathcal{T})$ into $L_2(\mathcal{T})$, there exists a sequence $\left\{w_{\left(n_k\right)}\right\}_{k \in \mathbb{N}}$ that converges in $L_2(\mathcal{T})$. Inequality (\ref{eq_us9}) gives
$$\frac{\alpha C_3}{3}\left\|w_{\left(n_k\right)}-w_{\left(n_l\right)}\right\|_{W_2^1\left(\widetilde{\mathcal{T}}\right)}^2 \leq \mathcal{J}\left(w_{\left(n_k\right)}-w_{\left(n_l\right)}\right)+K\left\|w_{\left(n_k\right)}-w_{\left(n_l\right)}\right\|_{L_2(\mathcal{T})}^2.$$
Moreover, using (\ref{eq_us2}) for $n=2$ along with (\ref{eq_us6}), we get
$$\mathcal{J}\left(w_{\left(n_k\right)}-w_{\left(n_l\right)}\right) \leq \frac{2}{n_k}+\frac{2}{n_l}.$$
Thus, $\left\{w_{\left(n_k\right)}\right\}_{k \in \mathbb{N}}$ is a Cauchy sequence also in $W_2^1\left(\widetilde{\mathcal{T}}\right)$. Let $w_{(0)}$ be its limit therein.

By virtue of Lemma \ref{L3}, the convergence of $w_{\left(n_k\right)}$ to $w_{(0)}$ in $W_2^1\left(\widetilde{\mathcal{T}}\right)$ implies $\ell w_{\left(n_k\right)} \rightarrow \ell w_{(0)}$ in $L_2(\mathcal{T})$. Then, due to (\ref{eq_us6}), we have
$$\left\|\ell w_{(0)}\right\|_{L_2(\mathcal{T})}^2=\lim _{k \rightarrow \infty}\left\|\ell w_{\left(n_k\right)}\right\|_{L_2(\mathcal{T})}^2 \leq \frac{1}{\alpha} \lim _{k \rightarrow \infty} \mathcal{J}\left(w_{\left(n_k\right)}\right)=0,$$
that is, $\ell w_{(0)}=0$. Thus, $w_{(0)}$ solves the Cauchy problem (\ref{eq_t2})--(\ref{eq_t5}) with $y_0=0$ and $u_j(t) \equiv 0$ for $j=\overline{1, m}$. Hence, we have $w_{(0)}=0$, which contradicts $\left\|w_{(0)}\right\|_{W_2^1(\mathcal{T})}=1$.
\end{proof}

We now proceed to prove the main result of this section.

\begin{theorem}\label{T5}
The boundary value problem $\mathcal{B}$ has a unique solution $y \in W_2^1\left(\mathcal{T}\right) \cap W_2^2\left(\widetilde{\mathcal{T}}\right)$. Moreover, there exists $C$ such that
\begin{equation}
\|y\|_{W_2^1\left(\mathcal{T}\right)} \leq C|y_0|.
\label{eq_us10}
\end{equation}
\end{theorem}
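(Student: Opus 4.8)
The plan is to recognize $B$ as a bounded, symmetric, coercive bilinear form on the Hilbert space $\mathcal{W}$ and then invoke the Lax--Milgram theorem (equivalently, the Riesz representation theorem, since $B$ is symmetric), after first reducing the inhomogeneous boundary condition (\ref{eq_t5}) to a homogeneous one. By Theorem~\ref{T4} and Lemma~\ref{L2}, solving $\mathcal{B}$ is the same as finding $y\in W_2^1(\mathcal{T})$ obeying (\ref{eq_t4})--(\ref{eq_t6}) and the identity (\ref{eq_t8}). To homogenize, I would fix once and for all a lift $y^{\ast}\in W_2^1(\mathcal{T})$ satisfying (\ref{eq_t4})--(\ref{eq_t6}), depending linearly on $y_0$, with $\|y^{\ast}\|_{W_2^1(\mathcal{T})}\le C_0|y_0|$: concretely one may take $y_1^{\ast}(t)=y_0\chi(t)$ for a fixed cut-off $\chi\in W_2^1[0,T_1]$ with $\chi(0)=1$ and $\chi\equiv 0$ near $t=T_1$, and $y_j^{\ast}\equiv 0$ for $j=\overline{2,m}$; this tuple fulfils the matching, initial and target conditions. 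Writing $y=y^{\ast}+\tilde y$ with $\tilde y\in\mathcal{W}$, the problem becomes the variational identity
$$
B(\tilde y,w)=-B(y^{\ast},w)\qquad\forall w\in\mathcal{W}.
$$

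I would then verify the hypotheses on $\mathcal{W}$ equipped with the norm $\|\cdot\|_{W_2^1(\widetilde{\mathcal{T}})}$. For $w\in\mathcal{W}$ the target condition (\ref{eq_t6}) forces $w_j$ and $w_j'$ to vanish on $[q_j(T_j),T_j]$ for $j=\overline{d+1,m}$, so $\|w\|_{W_2^1(\widetilde{\mathcal{T}})}=\|w\|_{W_2^1(\mathcal{T})}$ and $\mathcal{W}$ is a Hilbert space in this norm. The form $B$ is symmetric, and by Cauchy--Schwarz together with Lemma~\ref{L3} it is bounded, $|B(u,w)|\le\bar\alpha\,\|\ell u\|_{L_2(\mathcal{T})}\|\ell w\|_{L_2(\mathcal{T})}\le\bar\alpha C_1\|u\|_{W_2^1(\widetilde{\mathcal{T}})}\|w\|_{W_2^1(\widetilde{\mathcal{T}})}$, where $\bar\alpha:=\max_j\alpha_j$. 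Coercivity is precisely Lemma~\ref{L4}, since $B(w,w)=\mathcal{J}(w)\ge C_2\|w\|_{W_2^1(\widetilde{\mathcal{T}})}^2$. The right-hand side $w\mapsto-B(y^{\ast},w)$ is a bounded linear functional on $\mathcal{W}$ because $|B(y^{\ast},w)|\le\bar\alpha\,\|\ell y^{\ast}\|_{L_2(\mathcal{T})}\,C_1^{1/2}\|w\|_{W_2^1(\widetilde{\mathcal{T}})}$, and $\|\ell y^{\ast}\|_{L_2(\mathcal{T})}\le C\|y^{\ast}\|_{W_2^1(\mathcal{T})}$ by the definition (\ref{eq_t2}) with (\ref{eq_t3}) and a change of variables (the same computation as in (\ref{eq_us4})). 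Lax--Milgram then yields a unique $\tilde y\in\mathcal{W}$, hence a unique $y=y^{\ast}+\tilde y$ solving (\ref{eq_t8}); by Lemma~\ref{L2} this $y$ automatically belongs to $W_2^2(\widetilde{\mathcal{T}})$ and solves $\mathcal{B}$. Uniqueness does not depend on the choice of $y^{\ast}$, since any two solutions differ by some $z\in\mathcal{W}$ with $B(z,w)=0$ for all $w$; taking $w=z$ and using coercivity forces $z=0$.

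For the estimate (\ref{eq_us10}) I would combine coercivity with the functional bound. From
$$
C_2\|\tilde y\|_{W_2^1(\widetilde{\mathcal{T}})}^2\le B(\tilde y,\tilde y)=-B(y^{\ast},\tilde y)\le\bar\alpha C_1^{1/2}\|\ell y^{\ast}\|_{L_2(\mathcal{T})}\|\tilde y\|_{W_2^1(\widetilde{\mathcal{T}})}
$$
and $\|\ell y^{\ast}\|_{L_2(\mathcal{T})}\le C\|y^{\ast}\|_{W_2^1(\mathcal{T})}\le CC_0|y_0|$, one obtains $\|\tilde y\|_{W_2^1(\widetilde{\mathcal{T}})}\le C'|y_0|$, whence $\|y\|_{W_2^1(\mathcal{T})}\le\|y^{\ast}\|_{W_2^1(\mathcal{T})}+\|\tilde y\|_{W_2^1(\widetilde{\mathcal{T}})}\le C|y_0|$.

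The existence and uniqueness are essentially routine once Lemmas~\ref{L3} and \ref{L4} supply boundedness and coercivity, so I expect the main obstacles to be the two structural points that make this machinery applicable: first, constructing the lift $y^{\ast}$ so that it simultaneously satisfies all standing conditions (\ref{eq_t4})--(\ref{eq_t6}) and depends linearly on $y_0$ with controlled norm; and second, identifying $\mathcal{W}$ as a space on which the $W_2^1(\widetilde{\mathcal{T}})$- and $W_2^1(\mathcal{T})$-norms coincide, which is exactly what lets Lemmas~\ref{L3} and \ref{L4} be read off directly and ties the final bound back to $\|y\|_{W_2^1(\mathcal{T})}$.
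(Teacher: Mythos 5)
Your proposal is correct and follows essentially the same route as the paper: the paper likewise constructs an explicit lift $\Phi$ (a piecewise-linear cut-off of $y_0$ on $e_1$, zero elsewhere), reduces via Lemma~\ref{L2} to the identity $B(\Phi,w)+B(x,w)=0$ on $\mathcal{W}$, observes that $B(w,w)=\mathcal{J}(w)$ together with Lemmas~\ref{L3} and~\ref{L4} makes $B$ an equivalent inner product on $\mathcal{W}$, and applies the Riesz representation theorem (your Lax--Milgram invocation is the same thing here, as you note, since $B$ is symmetric), finishing with the same coercivity-based estimate for (\ref{eq_us10}).
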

\begin{proof}
Consider the function $\Phi=\left[\Phi_1, \ldots, \Phi_m\right] \in W_2^1\left(\mathcal{T}\right)$ determined by the formulae
\[
\Phi_1\left(t\right) = \begin{cases} 
y_0\left(1-\frac{qt}{T_1}\right), & 0 \leqslant t < q^{-1}T_1, \\
0, & q^{-1}T_1 \leqslant t \leqslant T_1,
\end{cases}
\quad \Phi_j\left(t\right) \equiv 0, \quad j = 2, \dots, m.
\]
By virtue of Lemma \ref{L2}, for a function $y \in W_2^1\left(\mathcal{T}\right)$ obeying conditions (\ref{eq_t4})--(\ref{eq_t6}), to be a solution of the problem $\mathcal{B}$, it is necessary and sufficient to satisfy (\ref{eq_t8}). In other words, $y$ is a solution of $\mathcal{B}$ if and only if $x:=y-\Phi \in \mathcal{W}$ (which is equivalent to (\ref{eq_t4})--(\ref{eq_t6})) and
\begin{equation}
B(\Phi, w)+B(x, w)=0\quad\forall w \in \mathcal{W}
\label{eq_us11}
\end{equation}
(which, in turn, is equivalent to (\ref{eq_t8})).

Since $B(w, w)=\mathcal{J}(w)$, Lemmas  \ref{L3} and \ref{L4}  imply that $(\cdot, \cdot)_{\mathcal{W}}:=B(\cdot, \cdot)$  is an equivalent inner product on $\mathcal{W}$. Moreover, we have the estimate
\begin{equation}
\left|B\left(\Phi, w\right)\right|= \alpha_1\left|\displaystyle\int_{0}^{T_1}\ell_1\Phi\left(t\right)\ell_1 w\left(t\right) dt \right| \leqslant M|y_0| \| w\|_{\mathcal{W}},
\label{eq_us12}
\end{equation}
where $\|w\|_{\mathcal{W}}=\sqrt{(w, w)_{\mathcal{W}}}$. Thus, by virtue of the Riesz theorem on the general form of a linear bounded functional in a Hilbert space, there exists a unique $x \in \mathcal{W}$ such that (\ref{eq_us11}) is fulfilled. Hence, the problem $\mathcal{B}$ has the unique solution $y=\Phi+x$.

Finally, according to (\ref{eq_us11}) and (\ref{eq_us12}), we have
$$\|x\|_{\mathcal{W}}\leqslant M|y_0|.$$
which along with Lemma \ref{L4} gives
$$\|x\|_{W_2^1\left(\mathcal{T}\right)}=\|x\|_{W_2^1\left(\widetilde{\mathcal{T}}\right)}\leqslant\frac{M}{\sqrt{C_2}}|y_0|.$$
Using also the estimate
$$\|\Phi\|^2_{W^1_2\left(\mathcal{T}\right)} = \|\Phi_1\|^2_{W^1_2\left[0,q^{-1}T_1\right]} = \frac{T_1^2+3q^2}{3qT_1}y_0^2,$$
we arrive at (\ref{eq_us10}).
\end{proof}

\backmatter

\bmhead{Funding}
This work was supported by the Russian Science Foundation, project no. 24-71-10003.

\bmhead{Data availability} 
Data sharing is not applicable to this article.

\section*{Declarations}

Not applicable. 

\bmhead{Conflict of interest} 

The author has no relevant financial or non-financial interests to disclose.

\bmhead{Ethics approval and consent to participate}

Not applicable.

\bibliography{sn-bibliography}

\end{document}